\RequirePackage{cmap}
\documentclass[preprint,12pt]{elsarticle}

\usepackage[T1]{fontenc}
\usepackage[utf8]{inputenc}
\usepackage{lmodern}
\usepackage{amsmath}
\usepackage{amssymb}

\usepackage{amsthm}
\usepackage{booktabs}
\usepackage{array}
\usepackage{enumitem}
\usepackage{mathtools}
\usepackage{graphicx}

\usepackage[hyphens]{url}
\usepackage[colorlinks=true,citecolor=blue,linkcolor=blue,urlcolor=blue]{hyperref}

\newtheorem{theorem}{Theorem}[section]
\newtheorem{lemma}[theorem]{Lemma}
\newtheorem{proposition}[theorem]{Proposition}

\theoremstyle{definition}
\newtheorem{definition}[theorem]{Definition}
\theoremstyle{remark}

\DeclareMathOperator{\diag}{diag}

\journal{Linear Algebra and its Applications}
\begin{document}

\begin{frontmatter}

\title{Upper bounds for the Laplacian spectral radius:\\Proofs and counterexamples}

\author[inst1,inst2]{Ivan Damnjanovi\'c}
\ead{ivan.damnjanovic@elfak.ni.ac.rs}

\author[inst3]{Taewoo Ha}
\ead{txh2120@gmail.com}

\author[inst4]{Dragan Stevanovi\'c}
\ead{dragan_stevanovic@mi.sanu.ac.rs}

\affiliation[inst1]{
  organization={Faculty of Electronic Engineering, University of Ni\v{s}},
  city={Ni\v{s}},
  country={Serbia}
}

\affiliation[inst2]{
  organization={FAMNIT, University of Primorska},
  city={Koper},
  country={Slovenia}
}

\affiliation[inst3]{
  organization={Independent Researcher},
  city={California},
  country={United States}
}

\affiliation[inst4]{
  organization={Mathematical Institute of the Serbian Academy of Sciences and Arts},
  city={Belgrade},
  country={Serbia}
}

\begin{abstract}
The Laplacian spectral radius of a graph is the largest eigenvalue of its Laplacian matrix. Previously, upper bounds for the Laplacian spectral radius were proposed using a backward-reconstruction procedure starting from expressions equal to $2x$ and substituting local degree data. A numbered list of 68 such candidate bounds was subsequently investigated, resulting in the refutation of 30 of these bounds; two additional bounds were later refuted in a separate study. This paper updates the status of the remaining 36 candidate bounds. Of these remaining bounds, we confirm 22 and refute 12, leaving only two upper bounds open. The valid bounds follow primarily from classical Laplacian spectral radius bounds and the Collatz--Wielandt comparison; the refutations are carried out through explicit counterexamples relying on equitable partitions.
\end{abstract}

\begin{keyword}
Laplacian spectral radius \sep upper bound \sep Collatz--Wielandt comparison \sep
counterexample \sep equitable partition
\MSC[2020] 05C50 \sep 05C07
\end{keyword}

\end{frontmatter}

\section{Introduction}\label{sec:intro}

Let \(G = (V, E)\) be a finite simple graph. The \emph{Laplacian matrix} of $G$ is $L(G) = D(G) - A(G)$, where \(A(G)\) is the adjacency matrix and \(D(G)\) is the diagonal degree matrix. The \emph{Laplacian spectral radius} of $G$, denoted by $\mu(G)$, is the largest eigenvalue of $L(G)$. Upper bounds for \(\mu(G)\) in terms of local degree data are a classical topic in spectral graph theory; see \cite{BrouHae2012}. For any $i \in V$, let $d_i$ denote the degree of vertex $i$. By Geršgorin's theorem, we have $\mu(G) \le 2 \max_{i \in V} d_i$. This result was then improved by Anderson and Morley~\cite{AndMor1985}, who derived the edge-degree bound $\mu(G) \le \max_{ij \in E}(d_i + d_j)$.

Many researchers subsequently took interest in studying upper bounds for $\mu(G)$ involving vertex degrees and average degrees of neighbors. Suppose that $G$ has no isolated vertices, and let $m_i$ denote the average degree of the neighbors of $i \in V$, i.e., $m_i = \frac{1}{d_i} \sum_{j \in N(i)} d_j$, where $N(i)$ denotes the neighborhood of vertex $i$. Among other results, Merris~\cite{Merris1998} obtained the upper bound $\mu(G) \le \max_{i \in V} (d_i + m_i)$, Li and Pan \cite{LiPan2001} showed that $\mu(G) \le \max_{i \in V} \sqrt{2 d_i ( d_i + m_i)}$, while Zhang \cite{Zhang2004} proved the bound $\mu(G) \le \max_{i \in V} \left( d_i + \sqrt{d_i m_i} \right)$. Various other researchers have also contributed to the study of upper bounds on $\mu(G)$ involving $d_i$ and $m_i$, including Das, Guo, Hong, Liu, Lu, Nikiforov, H.~Rojo, O.~Rojo, Shu, Soto, Tian, and Wen-Ren; see \cite{Das2003, Das2004, Guo2005, LiZhang1998, LiuLuTian2004, Nikiforov2007, Pan2002, Rojo2007, RoSoRo2000, ShuHongWenRen2002}.

In a previous paper \cite{BraHanSte2006}, upper bounds for the Laplacian spectral radius were proposed using a backward-reconstruction procedure that starts from expressions equal to $2x$ and substitutes local degree data. With this procedure, a number of candidate upper bounds for $\mu(G)$ were obtained, all of which are of the form
\begin{equation}\label{vertex_type}
    \mu(G) \le \max_{i \in V} f(d_i, m_i),
\end{equation}
where $f$ is a given function, and $G = (V, E)$ is any connected graph on at least two vertices, or
\begin{equation}\label{edge_type}
    \mu(G) \le \max_{ij \in E} f(d_i, m_i, d_j, m_j),
\end{equation}
where $f$ is a given function symmetric with respect to $i$ and $j$, and $G = (V, E)$ is any connected graph on at least two vertices.

In a later study \cite{GheYaKaSte2026_DAM}, a numbered list of 68 of the previously proposed bounds was considered, and 30 of them were refuted using a combined approach of reinforcement learning and an exhaustive search. We mention in passing that the reinforcement learning approach also led to a reimplementation of Wagner's original framework \cite{Wagner2021}; this new framework was later successfully used to obtain other theoretical results \cite{GheYaKaSte2025_AMC, GheYaKaSte2025_ADAM}. Taieb, Roucairol, Cazenave, and Harutyunyan \cite{TaRouCaHa2026} later refuted two more bounds using the Monte Carlo search technique.

Here, we consider the remaining 36 candidate upper bounds on $\mu(G)$ and use the original numbering from \cite{GheYaKaSte2026_DAM} throughout; see Tables \ref{tab:remaining-vertex} and \ref{tab:remaining-edge}. Of these remaining bounds, we confirm 22 and refute 12, leaving only two upper bounds open. Our main result is embodied in the following theorem.

\begin{theorem}\label{thm:main-resolution}
Bounds 1, 4--10, 12, 14, 16, 23, 25--27, 33--35, 37--39, and 42 from Tables \ref{tab:remaining-vertex} and \ref{tab:remaining-edge} are valid, while Bounds 11, 13, 18--22, 24, 30, 40, 47, and 56 are invalid.
\end{theorem}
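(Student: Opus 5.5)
The theorem splits into 22 validity proofs and 12 refutations, and I would handle the two halves separately.

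\textbf{Valid bounds.} For each of Bounds 1, 4--10, 12, 14, 16, 23, 25--27, 33--35, 37--39 and 42, I would first try to show the candidate dominates a classical bound pointwise. The vertex-type candidates $f(d_i,m_i)$ would be compared with Merris's $d_i+m_i$, with Zhang's $d_i+\sqrt{d_im_i}$, or with Li--Pan's $\sqrt{2d_i(d_i+m_i)}$. The edge-type candidates $f(d_i,m_i,d_j,m_j)$ would be compared with the Anderson--Morley bound $d_i+d_j$, or with known edge bounds such as $2+\sqrt{(d_i+d_j-2)(\ldots)}$ and $\frac{d_i(d_i+m_i)+d_j(d_j+m_j)}{d_i+d_j}$. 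Whenever $f\ge g$ holds pointwise for some proven bound $g$, validity is immediate, and checking the pointwise inequality is elementary algebra, such as AM--GM or comparing squares.

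When no classical bound is dominated, I would use the Collatz--Wielandt comparison. Since $\mu(G)$ equals the spectral radius of the signless Laplacian $Q=D+A$ for bipartite $G$, and $\mu(G)\le\rho(Q)$ in general, it suffices to bound $\rho(Q)$, or $\rho$ of a similar matrix $X^{-1}QX$. I would choose a positive test vector $x$ built from local data, with entries $x_i=d_i$ or $x_i=d_i+m_i$ or similar, and verify
\[
\max_i\frac{(Qx)_i}{x_i}\le \max_i f(d_i,m_i).
\]
For edge-type bounds I would use the line-graph form $Q=RR^{T}$, so that $\rho(Q)=\rho(R^{T}R)=\rho(2I+A(\mathcal L(G)))$. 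Taking edge weights such as $d_i+d_j$, the ratio at edge $ij$ becomes an expression in $d_i,m_i,d_j,m_j$, and I would bound it by the candidate $f$ using Cauchy--Schwarz or convexity.

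\textbf{Invalid bounds.} For each of Bounds 11, 13, 18--22, 24, 30, 40, 47 and 56, I would build a counterexample from a small parametric family with an equitable partition. Good candidates are complete split graphs $K_a\vee\overline{K_b}$, double stars or brooms, and $K_{1,a}$ with pendant paths. Within such a family the quantities $d_i,m_i$ take only a few values, and $\mu(G)$ is the largest eigenvalue of a $2\times2$ or $3\times3$ quotient matrix, provided that eigenvalue corresponds to a quotient eigenvector, which is justified because the quotient contains the Perron-type eigenvalue of $Q$ in the bipartite case. I would compare $\mu$ with $\max f$ either asymptotically as a parameter grows or at an explicit small size, and then certify the strict inequality by a characteristic-polynomial sign evaluation, $\det(tI-B)<0$ at $t=\max f$.

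\textbf{Main obstacle.} I expect the hardest part to be the few valid bounds whose $f$ lies strictly between known bounds and is not monotone in the natural parameters. There, finding the right Collatz--Wielandt test vector or similarity scaling is the genuine difficulty. I would first try $x_i=\sqrt{d_i}$ and $x_i=d_i$ on $Q$, then edge weights $\sqrt{d_id_j}$ on the line graph, and fall back on case analysis according to whether $d_i\ge m_i$.
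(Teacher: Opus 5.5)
Your proposal describes a toolkit rather than a proof. The theorem is a list of 34 concrete claims, and you carry out none of them; where the real work lies, the tools you name do not succeed as stated.

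\textbf{The simpler valid bounds.} Already here pointwise comparison with the classical bounds fails. For $m_i$ slightly larger than $d_i$, both $2d_i^2/m_i$ (Bound~4) and $d_i^2/m_i+m_i$ (Bound~5) are smaller than each of $d_i+m_i$, $d_i+\sqrt{d_im_i}$ and $\sqrt{2d_i^2+2d_im_i}$. Likewise $2(d_i+d_j)-(m_i+m_j)<d_i+d_j$ whenever $m_i+m_j>d_i+d_j$ (Bound~33). The same happens for Bounds 1, 7, 14, 16, 35 and~39. What is needed is to evaluate $f$ at one well-chosen vertex or edge:
\begin{itemize}
\item At a vertex with $d_i=\Delta$ one has $m_i\le\Delta$, so each of Bounds 1, 4, 5, 7, 14, 16 is at least $2\Delta\ge\mu(G)$.
\item At an edge maximizing $d_i+d_j$ one has $m_i\le d_j$ and $m_j\le d_i$. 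This reduces Bounds 33, 35, 39 to Anderson--Morley and to Bounds 34 and~38.
\end{itemize}
Bound~42 needs a separate argument that you do not give. The paper takes the Perron vector $\mathbf v$ of $P^{-1}QP$ with $P=\diag(d_i)$, and picks $i$ maximizing $\mathbf v_i$ and $j\in N(i)$ maximizing $\mathbf v_j$. This yields $(\rho(G)-d_i)(\rho(G)-d_j)\le m_im_j$, and the resulting root is then compared with $\sqrt{d_i^2+d_j^2+2m_im_j}$.

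\textbf{Bounds 8, 10/23, 25, 26, 27.} You rightly flag these as the obstacle, but the test vectors you propose provably fail. With $x_i=d_i$ or $x_i=\sqrt{d_i}$, the Collatz--Wielandt ratio at $i$ is bounded by Merris's $d_i+m_i$ or Zhang's $d_i+\sqrt{d_im_i}$, with equality when all neighbours of $i$ have equal degree. For Bound~8 this would need $1+\sqrt r\le\sqrt{3+r}$ with $r=m_i/d_i$, which is false for every $r>1$. Concretely, take a vertex of degree $d=100$ whose neighbours have degree $110$ and are otherwise attached only to leaves. Its ratio with $x_i=\sqrt{d_i}$ is $100+\sqrt{11000}\approx 204.9$, while $\lambda=\sqrt{41000}\approx 202.5$.

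The missing idea is Lemma~\ref{lem:cw}. Take $\lambda$ to be the candidate bound itself and $\mathbf v_i=\varphi(d_i/\lambda)$ with $\varphi$ concave. Jensen then gives $\sum_{j\in N(i)}\varphi(d_j/\lambda)\le d_i\,\varphi(m_i/\lambda)$, so everything collapses to the one-vertex inequality $d_i\,\varphi(m_i/\lambda)\le(\lambda-d_i)\,\varphi(d_i/\lambda)$. The function $\varphi$ must then be tuned to each bound:
\begin{itemize}
\item $x^{1/4}$ for Bound~8;
\item $x^{5/8}$ for Bound~27, together with $\lambda>\sqrt{3/2}\,m_i$ obtained from a neighbour of degree at least $m_i$;
\item a piecewise linear $\varphi$ for Bound~10;
\item $x(1-x)^{3/16}$ and $x(1-x)^{5/16}$ for Bounds 25 and~26.
\end{itemize}

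\textbf{Refutations.} Your refutation half contains no counterexample. Since invalidity \emph{is} the existence of specific graphs, nothing is proved there. The graphs that work (Table~\ref{tab:counterexamples}) have three or four cells with specific, sometimes large, sizes such as $(69,19,76)$ or $(83,38,44,88)$. Those for Bounds 11 and~13 are not bipartite. Your bipartite Perron justification is also unnecessary: Lemma~\ref{lem:quotient} already gives $\mu(G)\ge\lambda_{\max}(L_B)$ for every equitable partition, and Lemma~\ref{lem:realizable} guarantees that the graph exists.
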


In light of Theorem \ref{thm:main-resolution}, the only candidate bounds left open are Bounds~44 and~46 from Table \ref{tab:remaining-edge}. Note that Bounds~10 and~23 from Table \ref{tab:remaining-vertex} are identical; hence, we actually consider only 35 mutually distinct candidate upper bounds.

\begin{table}[t]
\centering
\caption{The 24 vertex-type bounds, i.e., bounds of the form \eqref{vertex_type}, among the remaining 36 candidate upper bounds on $\mu(G)$ from \cite{GheYaKaSte2026_DAM}, with the original numbering used. A check mark indicates that the bound is confirmed in this study, while a cross mark indicates that it has been refuted.}
\vspace{4pt}
\label{tab:remaining-vertex}
\small
\setlength{\tabcolsep}{2pt}
\renewcommand{\arraystretch}{1.02}
\begin{tabular*}{\textwidth}{@{\extracolsep{\fill}}rcl@{\quad}rcl@{}}
\toprule
Bound & $f(d_i, m_i)$ & Status & Bound & $f(d_i, m_i)$ & Status \\
\midrule
1 & \(\sqrt{4d_i^3/m_i}\) & \(\checkmark\)
& 16 & \(2d_i^4/m_i^3\) & \(\checkmark\) \\
4 & \(2d_i^2/m_i\) & \(\checkmark\)
& 18 & \(\sqrt{2m_i^3/d_i+2d_i^2}\) & \(\times\) \\
5 & \(d_i^2/m_i+m_i\) & \(\checkmark\)
& 19 & \(\sqrt[4]{4d_i^4 + 12d_i m_i^3}\) & \(\times\) \\
6 & \(\sqrt{m_i^2+3d_i^2}\) & \(\checkmark\)
& 20 & \(\frac12\sqrt{7d_i^2+9m_i^2}\) & \(\times\) \\
7 & \(d_i^2/m_i+d_i\) & \(\checkmark\)
& 21 & \(\sqrt{d_i^3/m_i+3m_i^2}\) & \(\times\) \\
8 & \(\sqrt{d_i(m_i+3d_i)}\) & \(\checkmark\)
& 22 & \(\sqrt[4]{2d_i^4+14d_i^2m_i^2}\) & \(\times\) \\
9 & \((m_i+3d_i)/2\) & \(\checkmark\)
& 23 & \(\sqrt{d_i^2+3d_i m_i}\) & \(\checkmark\) \\
10 & \(\sqrt{d_i(d_i+3m_i)}\) & \(\checkmark\)
& 24 & \(\sqrt[4]{6d_i^4+10m_i^4}\) & \(\times\) \\
11 & \(2m_i^3/d_i^2\) & \(\times\)
& 25 & \(\sqrt[4]{3d_i^4+13d_i^2m_i^2}\) & \(\checkmark\) \\
12 & \(\sqrt{2m_i^2+2d_i^2}\) & \(\checkmark\)
& 26 & \(\frac12\sqrt{5d_i^2+11d_i m_i}\) & \(\checkmark\) \\
13 & \(2m_i^4/d_i^3\) & \(\times\)
& 27 & \(\sqrt{(3d_i^2+5d_i m_i)/2}\) & \(\checkmark\) \\
14 & \(2d_i^3/m_i^2\) & \(\checkmark\)
& 30 & \(m_i^3/d_i^2+d_i^2/m_i\) & \(\times\) \\
\bottomrule
\end{tabular*}
\end{table}

\begin{table}[t]
\centering
\caption{The 12 edge-type bounds, i.e., bounds of the form \eqref{edge_type}, among the remaining 36 candidate upper bounds on $\mu(G)$ from \cite{GheYaKaSte2026_DAM}, with the original numbering used. A check mark indicates that the bound is confirmed in this study, a cross mark indicates that it has been refuted, while \emph{open} means that the candidate bound is left open.}
\vspace{4pt}
\label{tab:remaining-edge}
\small
\setlength{\tabcolsep}{2pt}
\renewcommand{\arraystretch}{1.02}
\begin{tabular*}{\textwidth}{@{\extracolsep{\fill}}rcl@{}}
\toprule
Bound & $f(d_i, m_i, d_j, m_j)$ & Status \\
\midrule
33 & \(2(d_i+d_j)-(m_i+m_j)\) & \(\checkmark\) \\
34 & \(2(d_i^2+d_j^2)/(d_i+d_j)\) & \(\checkmark\) \\
35 & \(2(d_i^2+d_j^2)/(m_i+m_j)\) & \(\checkmark\) \\
37 & \(\sqrt{2(d_i^2+d_j^2)}\) & \(\checkmark\) \\
38 & \(2+\sqrt{2(d_i-1)^2+2(d_j-1)^2}\) & \(\checkmark\) \\
39 & \(2+\sqrt{2(d_i^2+d_j^2)-4(m_i+m_j)+4}\) & \(\checkmark\) \\
40 & \(2+\sqrt{2((m_i-1)^2+(m_j-1)^2)+(d_i^2+d_j^2)-(d_i m_i + d_j m_j)}\) & \(\times\) \\
42 & \(\sqrt{d_i^2+d_j^2+2m_i m_j}\) & \(\checkmark\) \\
44 & \(2+\sqrt{2((d_i-1)^2+(d_j-1)^2+m_i m_j-d_i d_j)}\) & open \\
46 & \(2+\sqrt{2(d_i^2+d_j^2)-16d_i d_j/(m_i+m_j)+4}\) & open \\
47 & \( (2(d_i^2+d_j^2)-(m_i-m_j)^2) / (d_i+d_j)\) & \(\times\) \\
56 & \((d_i^2+d_j^2)(m_i+m_j)/(2d_i d_j)\) & \(\times\) \\
\bottomrule
\end{tabular*}
\end{table}

Methodologically, the confirmations of the valid upper bounds rely on known Laplacian spectral radius inequalities and the Collatz--Wielandt comparison, while the refutations of the invalid bounds are carried out through explicit counterexamples using equitable partitions. To facilitate candidate generation and proof exploration, the authors consulted several AI-assisted systems, namely, AlphaEvolve, AxiomMath, Axplore, ChatGPT, and Claude. Notwithstanding this computational assistance, the authors have rigorously verified every result herein, and all claims rest entirely upon verifiable proofs or counterexamples.

In the rest of the paper, our main focus is to prove Theorem~\ref{thm:main-resolution}. We begin in Section~\ref{sec:preliminaries} by recalling the basic notation, some elementary spectral properties concerning equitable partitions, the Collatz--Wielandt comparison, and well-known upper bounds on the Laplacian spectral radius. Section~\ref{sec:confirmed} then establishes the proofs for the valid upper bounds, while Section~\ref{sec:disproved} provides explicit counterexamples for the invalid bounds through their quotient matrices. Several assertions within Section \ref{sec:confirmed} and all the counterexamples from Section \ref{sec:disproved} are verified computationally using the \texttt{SageMath} \cite{SageMath} scripts available in \cite{GitHub}.

\section{Preliminaries}
\label{sec:preliminaries}

We assume all graphs to be undirected, finite, simple, and without isolated vertices; in particular, we allow graphs to be disconnected. Although the candidate bounds of the form \eqref{vertex_type} or \eqref{edge_type} were originally stated for connected graphs, any disconnected counterexample trivially yields a connected one by considering a component with the largest Laplacian spectral radius.

For a graph $G$, let $V$ and $E$ denote its vertex and edge sets, and let $\Delta$ denote its maximum vertex degree. We let $A$ and $D$ denote the adjacency matrix and the diagonal degree matrix of $G$. The \emph{Laplacian matrix} $L$ and the \emph{signless Laplacian matrix} $Q$ of $G$ are defined as $L = D - A$ and $Q = D + A$. The \emph{Laplacian spectral radius} $\mu(G)$ and the \emph{signless Laplacian spectral radius} $\rho(G)$ are then defined as the largest eigenvalue of $L$ and of $Q$, respectively. For any $i \in V$, let $d_i$ denote the degree of vertex $i$. Furthermore, let $m_i$ denote the average degree of the neighbors of vertex $i$, i.e., $m_i = \frac{1}{d_i} \sum_{j \in N(i)} d_j$, where $N(i)$ denotes the neighborhood of vertex $i$.

For candidate bounds where an expression inside the maximum yields a negative argument under a square root, we adopt the convention that such undefined terms evaluate to $-\infty$, thereby effectively restricting the maximum to those vertex or edge expressions that are well-defined.

\begin{definition}[\hspace{1sp}{\cite[Section 9.3]{GodRoy2001}}]\label{def:equitable}
A partition $V = C_1 \sqcup C_2 \sqcup \cdots \sqcup C_k$ of the vertex set of a graph $G$ is called \emph{equitable} if, for every pair of indices $i, j \in \{ 1, 2, \ldots, k \}$, every vertex in cell $C_i$ has the same number of neighbors in cell $C_j$.
\end{definition}

Let $b_{ij}$ denote the number of neighbors that a vertex in $C_i$ has in $C_j$. The matrix $B = (b_{ij}) \in \mathbb{R}^{k \times k}$ is the \emph{quotient matrix} of the graph $G$ with respect to the equitable partition $C_1 \sqcup C_2 \sqcup \cdots \sqcup C_k$. We then define the \emph{quotient Laplacian matrix} $L_B \in \mathbb{R}^{k \times k}$ as $L_B = \diag(s_1, s_2, \ldots, s_k) - B$, where $s_i = \sum_{j=1}^k b_{ij}$ is the degree of any vertex in $C_i$.

The following lemma establishes the spectral relationship between a graph and its quotient Laplacian matrix, which follows by observing that any eigenvector of $L_B$ can be lifted to an eigenvector of $L$ that is constant on each cell of the partition; see, e.g., \cite[Section 2.3]{BrouHae2012} and \cite[Section 9.3]{GodRoy2001}.

\begin{lemma}\label{lem:quotient}
For any equitable partition of a graph $G$, the spectrum of the corresponding quotient Laplacian matrix $L_B$ is contained in the spectrum of $L$. In particular, $\mu(G)$ is at least the spectral radius of $L_B$.
\end{lemma}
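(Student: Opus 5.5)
The plan is to show directly that every eigenvector of $L_B$ lifts to an eigenvector of $L$ with the same eigenvalue; both claims of the lemma then follow at once. Let $C_1 \sqcup \cdots \sqcup C_k$ be the equitable partition and let $P \in \mathbb{R}^{|V| \times k}$ be its characteristic matrix, i.e., $P_{v,j} = 1$ if $v \in C_j$ and $0$ otherwise. The columns of $P$ have disjoint nonempty supports, so $P$ has full column rank $k$.

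The key step is the intertwining identity $LP = P L_B$. We verify it entrywise. For a vertex $v \in C_i$ and an index $j$, the equitability in Definition~\ref{def:equitable} gives $(AP)_{v,j} = |N(v) \cap C_j| = b_{ij} = (PB)_{v,j}$, so $AP = PB$. The degree of $v$ is $d_v = \sum_j b_{ij} = s_i$, hence $(DP)_{v,j} = s_i [v \in C_j] = (P \diag(s_1,\ldots,s_k))_{v,j}$, so $DP = P\diag(s_1, \ldots, s_k)$. Subtracting the two identities yields $LP = P L_B$.

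Now let $L_B x = \lambda x$ with $x \neq 0$; here $\lambda$ and $x$ may a priori be complex, since $L_B$ need not be symmetric. Then
\begin{equation*}
L(Px) = P L_B x = \lambda Px.
\end{equation*}
Because $P$ has full column rank, $Px \neq 0$, so $\lambda$ is an eigenvalue of $L$ with eigenvector $Px$, which is constant on each cell. Hence the spectrum of $L_B$, taken as a set, lies in the spectrum of $L$.

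Since $L$ is real symmetric and positive semidefinite, every eigenvalue of $L_B$ is real and nonnegative. The spectral radius of $L_B$ is therefore one of its eigenvalues, and so it is an eigenvalue of $L$, which is at most $\mu(G)$. The only delicate point is this last step: we need the spectrum of the possibly nonsymmetric $L_B$ to be real, and that comes from the inclusion itself. One could alternatively note that $L_B$ is similar to a symmetric matrix via $\diag(|C_i|^{1/2})$, but that is not needed. There is no real obstacle beyond the bookkeeping in the identity $LP = PL_B$.
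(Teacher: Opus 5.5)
Your proof is correct and follows the route the paper indicates. The paper justifies the lemma by observing that every eigenvector of $L_B$ lifts to a cell-constant eigenvector of $L$, and your identity $LP = PL_B$, together with the full column rank of $P$, makes that lifting explicit. Your handling of the final step is also sound: the eigenvalues of the possibly nonsymmetric $L_B$ are real and nonnegative because they are eigenvalues of $L$. Your aside about the similarity via $\diag(|C_i|^{1/2})$ is exactly what you would add, together with injectivity of $P$, if the inclusion were read with multiplicities; neither the lemma's ``in particular'' clause nor its later use in the paper needs that.
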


The next lemma provides the necessary and sufficient conditions for a given matrix to be realizable as the quotient matrix of an equitable partition with prescribed cell sizes. These conditions follow directly from standard existence results for regular graphs and semiregular bipartite graphs.

\begin{lemma}\label{lem:realizable}
For some $k \in \mathbb{N}$, let $n_1, n_2, \ldots, n_k \in \mathbb{N}$, and let $B = (b_{ij}) \in \mathbb{R}^{k \times k}$ be a square matrix with nonnegative integer entries. There exists a graph with an equitable partition having cell sizes $n_1, n_2, \ldots, n_k$ and quotient matrix $B$ if and only if:
\begin{enumerate}[label=\textbf{(\arabic*)}]
    \item for every $i \in \{ 1, 2, \ldots, k \}$, $b_{ii} \le n_i - 1$ and at least one of $b_{ii}$ and $n_i$ is even; and
    \item for every $i, j \in \{ 1, 2, \ldots, k \}$ with $i \neq j$, $b_{ij} \le n_j$ and $n_i b_{ij} = n_j b_{ji}$.
\end{enumerate}
\end{lemma}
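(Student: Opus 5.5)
We reduce the lemma to two classical existence facts, handling diagonal and off-diagonal blocks separately. The graph inside cell $C_i$ is a $b_{ii}$-regular graph on $n_i$ vertices. The edges between $C_i$ and $C_j$ ($i\ne j$) form a bipartite graph in which every vertex of $C_i$ has degree $b_{ij}$ and every vertex of $C_j$ has degree $b_{ji}$. Conversely, any choice of such graphs, one for each cell and one for each unordered pair of cells, assembles into a graph on $n_1+\cdots+n_k$ vertices. Its partition into the cells is equitable with quotient matrix $B$, directly by Definition~\ref{def:equitable}. The blocks share no edges, so the choices are independent, and the lemma follows once we characterize when each block exists.

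\textbf{Necessity.} Suppose such a graph exists.
\begin{itemize}
\item Inside $C_i$, each vertex has $b_{ii}$ neighbors among the other $n_i-1$ vertices of $C_i$, so $b_{ii}\le n_i-1$.
\item Counting the edges within $C_i$ by the handshake lemma gives $n_ib_{ii}/2$ edges. Hence $n_ib_{ii}$ is even, i.e.\ at least one of $n_i,b_{ii}$ is even.
\item For $i\ne j$, a vertex of $C_i$ has at most $|C_j|$ neighbors in $C_j$, so $b_{ij}\le n_j$.
\item Double counting the edges between $C_i$ and $C_j$ gives $n_ib_{ij}=n_jb_{ji}$.
\end{itemize}
These are exactly conditions (1) and (2).

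\textbf{Sufficiency, diagonal blocks.} We need an $r$-regular graph on $n$ vertices whenever $0\le r\le n-1$ and $nr$ is even. We would construct it as a circulant on $\mathbb{Z}_n$. If $r$ is even, join each vertex $x$ to $x\pm1,\dots,x\pm r/2$; this needs $r/2\le (n-1)/2$, which holds. If $r$ is odd, then $n$ is even, and we additionally join $x$ to its antipode $x+n/2$. The count is right because $(r-1)/2<n/2$. The case $r=0$ is the empty graph.

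\textbf{Sufficiency, off-diagonal blocks.} We need a semiregular bipartite graph with parts of sizes $n_i,n_j$, degrees $a=b_{ij}\le n_j$ on the $n_i$ side and $b=b_{ji}\le n_i$ on the $n_j$ side, and $n_ia=n_jb=:e$. Note that the inequality $b_{ji}\le n_i$ is also part of condition (2), applied to the ordered pair $(j,i)$. We would label the $e$ edge slots $0,\dots,e-1$. Slot $t$ is assigned to the vertex $\lfloor t/a\rfloor$ on the left and to the vertex $t \bmod n_j$ on the right. Each left vertex then receives $a$ consecutive slots. Since $a\le n_j$, those slots have distinct residues mod $n_j$, so the resulting edges are distinct and there are no multi-edges. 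Each residue class mod $n_j$ occurs exactly $e/n_j=b$ times, so each right vertex has degree $b$. If $a=0$ then $b=0$ and the empty bipartite graph works.

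The only step needing care is this simplicity check for the bipartite construction, i.e.\ that no left vertex is joined twice to the same right vertex. It is settled by the consecutive-slots argument. Alternatively, one can invoke the Gale--Ryser theorem for constant degree sequences. The rest of the proof is routine bookkeeping.
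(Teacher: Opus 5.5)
Your proof is correct and follows the route the paper indicates. The paper only cites ``standard existence results for regular graphs and semiregular bipartite graphs'', splitting the graph into a $b_{ii}$-regular graph inside each cell and a $(b_{ij},b_{ji})$-semiregular bipartite graph between each pair of cells; your circulant and consecutive-slot constructions prove those results explicitly, and both simplicity checks are sound.
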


The next result follows directly from the theory of nonnegative matrices; see \cite[Section~8.7]{GodRoy2001}.

\begin{lemma}\label{lem:LQ}
For every graph $G$, $\mu(G)\le \rho(G)$.
\end{lemma}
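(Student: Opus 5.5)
The plan is to prove $\mu(G)\le\rho(G)$ by comparing the two matrices entrywise via absolute values. Let $x$ be a unit eigenvector of $L$ for $\mu(G)$, and let $|x|$ denote the vector of absolute values $(|x_1|,\dots,|x_n|)$. Since $L$ and $Q$ are symmetric, the Rayleigh quotient characterization gives
\[
\mu(G)=x^{\top}Lx=\sum_{i}d_ix_i^2-2\sum_{ij\in E}x_ix_j .
\]
On the other hand,
\[
|x|^{\top}Q|x|=\sum_i d_i x_i^2+2\sum_{ij\in E}|x_i||x_j|\le \rho(G).
\]
Because $-x_ix_j\le |x_i||x_j|$ for every edge, the first quantity is at most the second. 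Hence $\mu(G)\le |x|^\top Q|x|\le\rho(G)$, the final step being the Rayleigh bound applied to the unit vector $|x|$.

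An alternative phrasing uses the theory of nonnegative matrices cited in the statement. One has $|L|=Q$ entrywise, so the spectral radius satisfies $\mu(G)\le\operatorname{sr}(L)\le \operatorname{sr}(|L|)=\operatorname{sr}(Q)=\rho(G)$. This uses the standard domination fact $|Mx|\le|M||x|$ together with the fact that, by Perron--Frobenius, the spectral radius of a nonnegative matrix equals its largest eigenvalue. Note also that $L$ is positive semidefinite, so its largest eigenvalue coincides with its spectral radius.

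There is no real obstacle here. The only point needing care is to invoke the variational (Rayleigh) characterization for the symmetric matrix $Q$, so that $|x|^\top Q|x|\le\rho(G)$ holds without any connectivity assumption. Connectivity is not needed, since the Rayleigh bound holds for any symmetric matrix; this matters because the paper allows disconnected graphs.
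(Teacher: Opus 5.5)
Your argument is correct. The paper gives no proof beyond citing the theory of nonnegative matrices, and that citation amounts to your second paragraph: $|L| = Q$ entrywise, and the spectral radius of a matrix is dominated by that of its entrywise absolute value.

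Your first paragraph is a more elementary, self-contained route to the same comparison. Evaluating the Rayleigh quotient of $Q$ at $|x|$ uses only the symmetry of $L$ and $Q$. It needs neither Perron--Frobenius nor any Collatz--Wielandt-type result, and it plainly does not depend on connectivity. The cited domination result is more general, since it holds for nonsymmetric matrices, but it is heavier.

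One small gap in your alternative: the step from $\mu(G)\,|x| = |Lx| \le Q|x|$ to $\mu(G) \le \rho(G)$ is not just the fact that the spectral radius of a nonnegative matrix is an eigenvalue. It needs the subinvariance bound: if $My \ge c\,y$ with $M \ge 0$ and $y \ge 0$, $y \neq 0$, then the spectral radius of $M$ is at least $c$. For symmetric $Q$ you can avoid this by pairing both sides with $|x|$, which brings you back to your first argument.
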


In addition, we need the upper bound component of the classical Collatz--Wielandt formula. For a proof of this result, see, e.g., \cite[Section~8.1]{HornJohn2013}.

\begin{lemma}[\hspace{1sp}{\cite[Section~8.1]{HornJohn2013}}]\label{lem:collatz}
Let $M \in \mathbb{R}^{n \times n}$ be a nonnegative matrix, and let $\mathbf{v} \in \mathbb{R}^n$ be a vector with strictly positive entries. If $M \mathbf{v}\le \lambda \mathbf{v}$ entrywise for some $\lambda \in \mathbb{R}$, then the spectral radius of $M$ is at most~$\lambda$.
\end{lemma}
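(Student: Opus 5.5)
The plan is to reduce the claim to the elementary fact that the spectral radius of a matrix is at most its maximum absolute row sum. We do this by a diagonal similarity that turns the vector $\mathbf{v}$ into the all-ones vector.

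Let $D_{\mathbf{v}} = \diag(v_1, v_2, \ldots, v_n)$. Since every $v_i > 0$, this matrix is invertible. Set $N = D_{\mathbf{v}}^{-1} M D_{\mathbf{v}}$, whose entries are $N_{ij} = M_{ij} v_j / v_i$. Two facts follow at once.
\begin{itemize}
\item $N$ is similar to $M$, so it has the same spectrum and hence the same spectral radius.
\item $N$ is entrywise nonnegative, because $M$ is nonnegative and every $v_i$ is positive.
\end{itemize}
The $i$-th row sum of $N$ is
\[
\sum_{j} M_{ij} v_j / v_i = (M\mathbf{v})_i / v_i \le \lambda .
\]
Because $N$ is nonnegative, this row sum equals the absolute row sum. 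Therefore $\max_i \sum_j |N_{ij}| \le \lambda$.

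Next we bound the eigenvalues of $N$. Let $\theta \in \mathbb{C}$ be any eigenvalue of $N$ with eigenvector $\mathbf{x} \neq \mathbf{0}$. Choose an index $k$ with $|x_k| = \max_j |x_j| > 0$. Then
\[
|\theta|\,|x_k| = \Bigl|\sum_j N_{kj} x_j\Bigr| \le \sum_j N_{kj} |x_j| \le \Bigl(\sum_j N_{kj}\Bigr) |x_k| \le \lambda |x_k| .
\]
Dividing by $|x_k|$ gives $|\theta| \le \lambda$. Since $\theta$ was arbitrary, the spectral radius of $N$, and hence of $M$, is at most $\lambda$.

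This also shows that $\lambda \ge 0$ necessarily: $M\mathbf{v} \ge \mathbf{0}$ and $\mathbf{v} > \mathbf{0}$ force it. So the conclusion is never vacuous or contradictory.

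There is no substantial obstacle here. The only point needing care is that nonnegativity of $M$ is exactly what lets the hypothesis $M\mathbf{v} \le \lambda \mathbf{v}$, a signed inequality, control the absolute row sums of $N$. Strict positivity of $\mathbf{v}$ is what makes $D_{\mathbf{v}}$ invertible and keeps $N$ nonnegative.
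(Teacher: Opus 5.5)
Your proof is correct and is essentially the standard argument: the paper gives no proof of its own but cites Horn and Johnson, Section~8.1, where this bound is obtained by the same diagonal similarity $D_{\mathbf{v}}^{-1} M D_{\mathbf{v}}$, together with the fact that the spectral radius of a nonnegative matrix is at most its maximum row sum. Your eigenvector argument is a self-contained proof of that last step, namely $\rho(N) \le \max_i \sum_j |N_{ij}|$.
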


We conclude the section with several well-known upper bounds on the Laplacian spectral radius involving vertex degrees and average degrees of neighbors, the first of which is immediate.

\begin{lemma}\label{lem:two-delta}
For every graph \(G\), $\mu(G) \le 2 \Delta$.
\end{lemma}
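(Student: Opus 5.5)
The claim $\mu(G)\le 2\Delta$ follows from Geršgorin's disc theorem applied to $L = D - A$. Fix a vertex $i$. The diagonal entry of $L$ in row $i$ is $d_i$. The off-diagonal entries of that row are $-1$ for each neighbor of $i$ and $0$ otherwise, so the absolute row sum of the off-diagonal entries is exactly $d_i$. Hence every eigenvalue $\lambda$ of $L$ lies in some disc $|\lambda - d_i| \le d_i$. Since $L$ is real symmetric, $\lambda$ is real, and the disc condition gives $\lambda \le 2d_i \le 2\Delta$. Taking $\lambda = \mu(G)$ yields the claim.

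Alternatively, the argument can stay within the framework already set up in the paper. By Lemma~\ref{lem:LQ}, $\mu(G)\le\rho(G)$, so it suffices to show $\rho(G)\le 2\Delta$. The matrix $Q = D + A$ is nonnegative. Take $\mathbf{v}$ to be the all-ones vector, which has strictly positive entries. Then $(Q\mathbf{v})_i = d_i + d_i = 2d_i \le 2\Delta = 2\Delta\, v_i$ for every $i$. Lemma~\ref{lem:collatz} therefore gives that the spectral radius of $Q$, which equals $\rho(G)$ because $Q$ is symmetric positive semidefinite, is at most $2\Delta$.

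I expect no real obstacle here. The only point needing a word of care is that the spectral radius of $Q$ coincides with its largest eigenvalue $\rho(G)$; this holds because $Q$ is positive semidefinite, being $Q = NN^{\top}$ for the unsigned vertex--edge incidence matrix $N$. Either route is a two-line argument, and I would present the Collatz--Wielandt version for consistency with the later proofs.
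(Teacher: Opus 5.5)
Your proposal is correct and matches the paper: it treats this lemma as immediate and, in the introduction, attributes it to Ger\v{s}gorin's theorem applied to $L$, which is your first argument. Your alternative (all-ones vector in Lemma~\ref{lem:collatz} applied to $Q$, then Lemma~\ref{lem:LQ}) is the same row-sum estimate routed through $Q$, and it is equally valid.
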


\begin{lemma}[\hspace{1sp}{\cite{AndMor1985}}]\label{lem:anderson-morley}
For every graph \(G\), $\mu(G) \le \max_{ij\in E}(d_i+d_j)$.
\end{lemma}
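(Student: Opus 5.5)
The statement immediately above is Lemma~\ref{lem:anderson-morley}, the Anderson--Morley edge bound. I would prove it with the Collatz--Wielandt comparison (Lemma~\ref{lem:collatz}) applied to the line-graph formulation. The plan rests on two facts. First, $\mu(G)\le\rho(G)$ by Lemma~\ref{lem:LQ}. Second, the nonzero eigenvalues of $Q=D+A$ coincide with those of $N^{\top}N$, where $N$ is the $|V|\times|E|$ vertex--edge incidence matrix. Indeed, $Q=NN^{\top}$, and $NN^{\top}$ and $N^{\top}N$ share their nonzero spectra.

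Next I compute $N^{\top}N = 2I + A(\mathcal L(G))$, where $\mathcal L(G)$ is the line graph of $G$. This matrix is nonnegative. In it, the row indexed by an edge $e=ij$ has diagonal entry $2$ and has off-diagonal ones exactly at the edges sharing an endpoint with $e$. The number of such edges is $(d_i-1)+(d_j-1)$, so the row sum at $e$ equals $d_i+d_j$.

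Now I take $\mathbf v$ to be the all-ones vector on $E$. Then $(N^{\top}N\mathbf v)_e = d_i+d_j \le \max_{ij\in E}(d_i+d_j)\,\mathbf v_e$ for every edge $e$. Lemma~\ref{lem:collatz} then gives $\rho(G)=\lambda_{\max}(N^{\top}N)\le\max_{ij\in E}(d_i+d_j)$. Combined with Lemma~\ref{lem:LQ}, this yields the claim.

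The only delicate point is the spectral identification between $Q$ and $N^{\top}N$. Since both matrices are positive semidefinite and $G$ has at least one edge (no isolated vertices), the largest eigenvalue is positive and is shared by the two matrices. Everything else is a direct row-sum count, so I do not expect a genuine obstacle.
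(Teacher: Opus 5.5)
Your proof is correct. The paper gives no proof of this lemma and simply cites Anderson and Morley~\cite{AndMor1985}. What you wrote is essentially the standard incidence-matrix/line-graph argument behind that result: $Q=NN^{\top}$, and $N^{\top}N=2I+A(\mathcal L(G))$ has row sum exactly $d_i+d_j$ at the edge $ij$. The all-ones test vector in Lemma~\ref{lem:collatz} then closes the argument. Routing it through Lemma~\ref{lem:LQ} and Lemma~\ref{lem:collatz} keeps it self-contained within the paper's own toolkit and needs no connectivity assumption. It also yields the stronger inequality $\rho(G)\le\max_{ij\in E}(d_i+d_j)$, consistent with the paper's later remark that its valid bounds also bound $\rho(G)$.
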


\begin{lemma}[\hspace{1sp}{\cite{Merris1998}}]\label{lem:merris}
For every graph \(G\), $\mu(G)\le \max_{i \in V}(d_i+m_i)$.
\end{lemma}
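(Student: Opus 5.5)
The plan is to combine Lemma~\ref{lem:LQ} with the Collatz--Wielandt comparison of Lemma~\ref{lem:collatz}, applied to the signless Laplacian $Q = D + A$. By Lemma~\ref{lem:LQ} we have $\mu(G) \le \rho(G)$, so it suffices to prove
\[
\rho(G) \le \lambda := \max_{i \in V}(d_i + m_i).
\]
Since $Q$ is a nonnegative matrix, Lemma~\ref{lem:collatz} reduces this to exhibiting one strictly positive test vector $\mathbf{v}$ with $Q\mathbf{v} \le \lambda \mathbf{v}$ entrywise.

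I would take $\mathbf{v} = (d_i)_{i \in V}$, the degree vector. Its entries are strictly positive because, by the standing assumption of Section~\ref{sec:preliminaries}, $G$ has no isolated vertices; this is also exactly what makes $m_i$ well defined. For each vertex $i$, the $i$-th entry of $Q\mathbf{v}$ is
\[
(Q\mathbf{v})_i = d_i \cdot d_i + \sum_{j \in N(i)} d_j = d_i^2 + d_i m_i = d_i(d_i + m_i).
\]
Since $d_i > 0$, this is at most $\lambda d_i = \lambda v_i$. Hence $Q\mathbf{v} \le \lambda \mathbf{v}$ entrywise, and Lemma~\ref{lem:collatz} gives $\rho(G) \le \lambda$. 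Combined with Lemma~\ref{lem:LQ}, this yields $\mu(G) \le \max_{i\in V}(d_i+m_i)$.

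An equivalent way to phrase the same computation is that $D^{-1} Q D$ is similar to $Q$ and has $i$-th row sum $d_i + m_i$. The spectral radius of a nonnegative matrix is at most its maximum row sum, so the same bound follows.

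I do not expect a genuine obstacle here. The only points needing care are the positivity of $\mathbf{v}$, which relies on the absence of isolated vertices, and the fact that Lemma~\ref{lem:collatz} must be applied to the nonnegative matrix $Q$ rather than directly to $L$. The passage from $L$ to $Q$ is exactly what Lemma~\ref{lem:LQ} supplies.
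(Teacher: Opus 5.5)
Your proof is correct. The paper does not prove this lemma; it only cites Merris. Your argument is the standard one and is built from the paper's own tools: it is the special case $\varphi(x)=x$ of Lemma~\ref{lem:cw}. There the test vector $\mathbf{v}_i=d_i/\lambda$ is a multiple of your degree vector, Jensen's inequality holds with equality, and $\lambda>\Delta$ holds because every $m_i\ge 1$. Your alternative phrasing via row sums is the same similarity $P^{-1}QP$ that the paper uses in the proof of Bound~42.
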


\begin{lemma}[\hspace{1sp}{\cite{LiPan2001}}]\label{lem:L2-bound}
For every graph \(G\), $\mu(G) \le \max_{i \in V} \sqrt{2 d_i^2 + 2 d_i m_i}$.
\end{lemma}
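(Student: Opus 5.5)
The proof of Lemma~\ref{lem:L2-bound} will go through the signless Laplacian and the Collatz--Wielandt comparison applied to $Q^2$ rather than $Q$. By Lemma~\ref{lem:LQ} it suffices to show $\rho(G)^2 \le \max_{i\in V}(2d_i^2+2d_i m_i)$. Since $Q$ is symmetric and nonnegative, $\rho(G)^2$ is the spectral radius of the nonnegative matrix $Q^2$. I will then apply Lemma~\ref{lem:collatz} to $M=Q^2$ with the all-ones vector $\mathbf{v}=\mathbf{1}$, whose entries are strictly positive.

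The key computation is the row sums of $Q^2$. First, $(Q\mathbf{1})_i = d_i + d_i = 2d_i$. Hence
\[
(Q^2\mathbf{1})_i = (Q(2\mathbf{d}))_i = 2\Bigl(d_i\cdot d_i + \sum_{j\in N(i)} d_j\Bigr) = 2d_i^2 + 2d_i m_i ,
\]
using $\sum_{j\in N(i)} d_j = d_i m_i$. With $\lambda=\max_i(2d_i^2+2d_im_i)$ we therefore have $Q^2\mathbf{1}\le \lambda\mathbf{1}$ entrywise. Lemma~\ref{lem:collatz} then gives $\rho(G)^2\le\lambda$, and taking square roots and combining with Lemma~\ref{lem:LQ} yields the claimed bound.

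There is no real obstacle here. The only point needing care is that the spectral radius of $Q^2$ equals $\rho(G)^2$. This holds because $Q$ is symmetric positive semidefinite: $Q = NN^{\top}$ for the vertex--edge incidence matrix $N$. Consequently its eigenvalues are nonnegative and the largest eigenvalue of $Q^2$ is $\rho(G)^2$. Isolated vertices are excluded by convention, so each $m_i$ is defined, but the row-sum identity does not even need this.
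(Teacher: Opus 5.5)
Your proof is correct. The paper does not prove this lemma; it imports it by citation from \cite{LiPan2001} and uses it only as a black box, in confirming Bounds 6 and 9.

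Your argument instead derives it from the paper's own preliminaries. The identity $(Q^2\mathbf{1})_i = 2d_i^2 + 2d_i m_i$ means that Lemma~\ref{lem:collatz}, applied to the nonnegative matrix $Q^2$ with the all-ones test vector, gives $\rho(G)^2 \le \max_{i}(2d_i^2+2d_im_i)$. Lemma~\ref{lem:LQ} then finishes the proof.

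This is the same Collatz--Wielandt mechanism the paper uses in Lemma~\ref{lem:cw}. There the comparison is made for $Q$ itself with a degree-dependent test vector $\varphi(d_i/\lambda)$. You square the matrix instead, which lets you use the trivial test vector.

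Your route buys two things. It makes the paper self-contained at this point, and it yields the stronger inequality for $\rho(G)$. That is consistent with the paper's remark that all of its confirmed bounds also bound the signless Laplacian spectral radius.

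One minor point: positive semidefiniteness of $Q$ is more than you need. For any square matrix $M$, the spectral radius of $M^2$ is the square of that of $M$. Also, the largest eigenvalue of a symmetric nonnegative matrix equals its spectral radius. Still, the factorization $Q = NN^{\top}$ is a perfectly valid justification.
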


\section{Confirmed upper bounds}
\label{sec:confirmed}

In this section, we prove the valid upper bounds from Theorem~\ref{thm:main-resolution}. We first address the bounds that follow directly from the classical estimates in Lemmas~\ref{lem:two-delta}--\ref{lem:L2-bound}.

\begin{proposition}\label{valid_prop_1}
    Bounds 1, 4, 5, 7, 14, and 16 from Table \ref{tab:remaining-vertex} are valid.
\end{proposition}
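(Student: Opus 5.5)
All six bounds should follow from Lemma~\ref{lem:two-delta} by evaluating the candidate expression at a single, well-chosen vertex. Since each bound has the form $\max_{i\in V} f(d_i,m_i)$, it suffices to find one vertex $i$ with $f(d_i,m_i)\ge 2\Delta$. I will take $i$ to be a vertex of maximum degree, so $d_i=\Delta$.

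Because every neighbor $j$ of $i$ satisfies $d_j\le \Delta$, we get $m_i\le \Delta=d_i$. Since there are no isolated vertices, also $m_i\ge 1>0$. Hence the ratio $t=d_i/m_i$ is well defined and satisfies $t\ge 1$. Each expression then reduces to $2\Delta$ times a factor that is at least $1$.

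\begin{itemize}
\item \textbf{Bound 4:} $2d_i^2/m_i = 2\Delta t \ge 2\Delta$.
\item \textbf{Bound 14:} $2d_i^3/m_i^2 = 2\Delta t^2\ge 2\Delta$.
\item \textbf{Bound 16:} $2d_i^4/m_i^3=2\Delta t^3\ge 2\Delta$.
\item \textbf{Bound 1:} $\sqrt{4d_i^3/m_i}=2\Delta\sqrt{t}\ge 2\Delta$.
\item \textbf{Bound 7:} $d_i^2/m_i+d_i=\Delta t+\Delta\ge 2\Delta$.
\end{itemize}

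Bound 5 needs a slightly different argument, because the term $m_i$ appears additively. By the AM--GM inequality,
\[
\frac{d_i^2}{m_i}+m_i \;\ge\; 2\sqrt{\frac{d_i^2}{m_i}\cdot m_i}=2d_i=2\Delta.
\]
This step does not even use $m_i\le d_i$.

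In each case, combining with Lemma~\ref{lem:two-delta} gives
\[
\mu(G)\le 2\Delta\le f(d_i,m_i)\le \max_{v\in V} f(d_v,m_v).
\]
No expression here involves a square root of a possibly negative quantity, so the $-\infty$ convention plays no role.

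I do not expect a genuine obstacle. The only points needing care are the observation $m_i\le d_i$ at a maximum-degree vertex and the separate AM--GM treatment of Bound 5.
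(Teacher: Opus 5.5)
Your proposal is correct and follows the paper's proof essentially verbatim. Both take a vertex with $d_i=\Delta$, use $m_i\le\Delta$ to show the expressions for Bounds 1, 4, 7, 14, and 16 are at least $2\Delta$, handle Bound 5 by AM--GM, and conclude via Lemma~\ref{lem:two-delta}.
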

\begin{proof}
    Let $i$ be a vertex of $G$ such that $d_i = \Delta$. Then $m_i \le \Delta$, hence the expressions $\sqrt{4 d_i^3 / m_i}$, $2 d_i^2 / m_i$, $d_i^2 / m_i + d_i$, $2 d_i^3 / m_i^2$, and $2 d_i^4 / m_i^3$ are all at least $2 \Delta$. Also, by the inequality of the arithmetic and geometric means, we have $d_i^2 / m_i + m_i \ge 2 \Delta$. The result now follows from Lemma \ref{lem:two-delta}.
\end{proof}

\begin{proposition}
    Bounds 6 and 9 from Table \ref{tab:remaining-vertex} are valid.
\end{proposition}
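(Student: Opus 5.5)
The plan is to pass through Zhang's bound $\mu(G)\le\max_{i\in V}\bigl(d_i+\sqrt{d_i m_i}\bigr)$, which I will derive from Lemmas~\ref{lem:LQ} and~\ref{lem:collatz}, and then compare it pointwise with the two target expressions. Neither Lemma~\ref{lem:merris} nor Lemma~\ref{lem:two-delta} suffices alone. Indeed, $d_i+m_i\le (m_i+3d_i)/2$ only when $m_i\le d_i$, and a vertex of maximum degree need not dominate the maximizer of $d_i+m_i$. However, $(3d_i+m_i)/2$ is the average of $2d_i$ (the Collatz--Wielandt ratio for the all-ones vector) and $d_i+m_i$ (the ratio for the degree vector). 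This suggests an interpolating test vector.

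\emph{Step 1.} By Lemma~\ref{lem:LQ} it suffices to bound $\rho(G)$. I apply Lemma~\ref{lem:collatz} to $M=Q=D+A$, which is nonnegative, with the strictly positive vector $v_i=\sqrt{d_i}$ (there are no isolated vertices). By Cauchy--Schwarz,
\[
(Qv)_i=d_i\sqrt{d_i}+\sum_{j\in N(i)}\sqrt{d_j}\le d_i\sqrt{d_i}+\sqrt{d_i\sum_{j\in N(i)}d_j}=\sqrt{d_i}\,\bigl(d_i+\sqrt{d_i m_i}\bigr).
\]
Hence $Qv\le\lambda v$ with $\lambda=\max_i\bigl(d_i+\sqrt{d_im_i}\bigr)$, so $\mu(G)\le\rho(G)\le\lambda$.

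\emph{Step 2 (pointwise comparisons).} For Bound~9, the AM--GM inequality gives $\sqrt{d_im_i}\le (d_i+m_i)/2$, hence $d_i+\sqrt{d_im_i}\le (3d_i+m_i)/2$ for every vertex $i$. For Bound~6, I need $(d+\sqrt{dm})^2\le m^2+3d^2$ for all positive $d,m$. Expanding the left side, this is equivalent to $2d^{3/2}m^{1/2}+dm\le m^2+2d^2$. The inequality is homogeneous, so I normalize $d=1$ and $m=t^2$ with $t>0$. It then reduces to
\[
t^4-t^2-2t+2=(t-1)^2(t^2+2t+2)\ge 0,
\]
which clearly holds. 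Taking maxima over $i$ then yields both bounds.

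The only real obstacle is finding the right test vector in Step~1. Once $v_i=\sqrt{d_i}$ is chosen, the rest is routine algebra, and the factorization in Step~2 is the single computation that needs checking.
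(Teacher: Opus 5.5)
Your proof is correct, but it takes a different route from the paper. The paper builds no test vector. It invokes the Li--Pan bound (Lemma~\ref{lem:L2-bound}), $\mu(G)\le\max_{i}\sqrt{2d_i^2+2d_im_i}$, and observes that $m_i^2+3d_i^2$ and $\bigl((m_i+3d_i)/2\bigr)^2$ exceed $2d_i^2+2d_im_i$ by $(d_i-m_i)^2$ and $(d_i-m_i)^2/4$, respectively. The whole proof is therefore two AM--GM lines. Your remark that neither Lemma~\ref{lem:merris} nor Lemma~\ref{lem:two-delta} suffices is accurate, but it overlooks that Lemma~\ref{lem:L2-bound}, already available, does.

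You instead rederive Zhang's bound $\max_i\bigl(d_i+\sqrt{d_im_i}\bigr)$ from Lemmas~\ref{lem:LQ} and~\ref{lem:collatz} with $v_i=\sqrt{d_i}$. This is exactly the mechanism of Lemma~\ref{lem:cw} with $\varphi(x)=\sqrt{x}$, with your Cauchy--Schwarz step playing the role of Jensen's inequality. Your route buys self-containedness and a pointwise sharper intermediate bound, since $2d_i\sqrt{d_im_i}\le d_i(d_i+m_i)$ gives $d_i+\sqrt{d_im_i}\le\sqrt{2d_i^2+2d_im_i}$. The price is the extra test-vector construction and, for Bound~6, the quartic factorization $(t-1)^2(t^2+2t+2)$. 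That factorization checks out, but you could bypass it by chaining $d_i+\sqrt{d_im_i}\le\sqrt{2d_i^2+2d_im_i}\le\sqrt{m_i^2+3d_i^2}$ through two applications of AM--GM.
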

\begin{proof}
    Let $i$ be any vertex of $G$. By the inequality of the arithmetic and geometric means, we have $m_i^2 + 3 d_i^2 \ge 2 d_i^2 + 2 d_i m_i$, as well as
    \[
        \left( \frac{m_i + 3d_i}{2} \right)^2 = \frac{9 d_i^2 + 6 d_i m_i + m_i^2}{4} \ge \frac{8 d_i^2 + 8 d_i m_i}{4} = 2 d_i^2 + 2 d_i m_i .
    \]
    Therefore, the expressions $\sqrt{m_i^2 + 3 d_i^2}$ and $(m_i + 3d_i) / 2$ are both at least $\sqrt{2 d_i^2 + 2 d_i m_i}$. The result now follows from Lemma \ref{lem:L2-bound}.
\end{proof}

\begin{proposition}
    Bound 12 from Table \ref{tab:remaining-vertex} is valid.
\end{proposition}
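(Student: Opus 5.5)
Bound 12 states that $\mu(G) \le \max_{i \in V}\sqrt{2m_i^2+2d_i^2}$. We compare it with the Li--Pan bound of Lemma~\ref{lem:L2-bound}, exactly as for Bounds 6 and 9. Pointwise, $2m_i^2+2d_i^2 \ge 2d_i^2+2d_im_i$ holds only when $m_i \ge d_i$, so a vertex-by-vertex comparison alone will not suffice. The plan is instead to compare the two maxima through a suitable choice of vertex.

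Let $i$ be a vertex maximizing $\sqrt{2d_i^2+2d_im_i}$, so that $\mu(G)\le\sqrt{2d_i^2+2d_im_i}$ by Lemma~\ref{lem:L2-bound}. We split into two cases.

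\textbf{Case $m_i \ge d_i$.} Then $d_im_i \le m_i^2$, hence $2d_i^2+2d_im_i \le 2d_i^2+2m_i^2$ and we are done at the same vertex $i$.

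\textbf{Case $m_i < d_i$.} Then $2d_i^2+2d_im_i < 4d_i^2 \le 4\Delta^2$. It therefore suffices to exhibit some vertex $j$ with $2d_j^2+2m_j^2 \ge 4\Delta^2$. The natural candidate, in the style of Proposition~\ref{valid_prop_1}, is a vertex of maximum degree. However, at such a vertex $j$ we have $m_j\le\Delta$, so $2d_j^2+2m_j^2 = 2\Delta^2+2m_j^2 \le 4\Delta^2$, with equality only if $m_j=\Delta$. This is the main obstacle, and the fallback is to avoid the $2\Delta$ route altogether.

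The remedy is to use Lemma~\ref{lem:anderson-morley} instead. Let $jk$ be an edge maximizing $d_j+d_k$, and assume without loss of generality that $d_j\ge d_k$. Then
\[
\mu(G)^2 \le (d_j+d_k)^2 \le 2d_j^2+2d_k^2 .
\]
Now bound $d_k$ by $m_j$. Since $k\in N(j)$, the key point is whether every neighbor $l$ of $j$ satisfies $d_l \ge d_k$ or not. If some neighbor $l$ of $j$ had $d_l>d_k$, the edge $jl$ would give $d_j+d_l>d_j+d_k$, contradicting maximality. Hence every neighbor of $j$ has degree at most $d_k$; this is the wrong direction.

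The fix is to replace the edge $jk$ by the pair $(k,j)$. Every neighbor $l$ of $k$ satisfies $d_l \le d_j$ by maximality, which again is the wrong direction for a lower bound on $m_k$. So I will instead combine the two relations: the maximality of the edge $jk$ gives $m_j\le d_k$ and $m_k\le d_j$, whereas a lower bound on an average neighbor degree is what is needed. The hardest step is therefore to find the right extremal vertex.

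The final attempt is to use Lemma~\ref{lem:merris}. Choose $i$ maximizing $d_i+m_i$, so that $\mu(G)\le d_i+m_i$. Since $(d_i+m_i)^2\le 2d_i^2+2m_i^2$ by the AM--QM inequality, this gives $\mu(G)\le \sqrt{2d_i^2+2m_i^2}\le \max_{i \in V}\sqrt{2m_i^2+2d_i^2}$. This settles Bound 12 directly, and it is the proof I would write.
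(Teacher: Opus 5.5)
Your final argument, which applies Lemma~\ref{lem:merris} and then uses $(d_i+m_i)^2 \le 2d_i^2+2m_i^2$ at the maximizing vertex, is correct and is exactly the paper's proof. You correctly diagnosed the earlier attempts via Lemmas~\ref{lem:L2-bound} and~\ref{lem:anderson-morley} as dead ends, so drop them from the write-up.
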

\begin{proof}
    Let $i$ be any vertex of $G$. By the inequality of the arithmetic and geometric means, we have $2 m_i^2 + 2 d_i^2 \ge d_i^2 + 2 d_i m_i + m_i^2 = (d_i + m_i)^2$. Therefore, $\sqrt{2 m_i^2 + 2 d_i^2} \ge d_i + m_i$, so the result follows from Lemma~\ref{lem:merris}.
\end{proof}

\begin{proposition}
    Bounds 33--35 and 37--39 from Table \ref{tab:remaining-edge} are valid.    
\end{proposition}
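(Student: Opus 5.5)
The plan is to compare each of these six edge-type expressions with the Anderson--Morley bound of Lemma~\ref{lem:anderson-morley}. It suffices to exhibit, in every graph $G$, one edge $ij$ on which the candidate expression is at least $\max_{uv\in E}(d_u+d_v)$. Throughout, we fix an edge $ij \in E$ that attains this maximum, so $d_i + d_j = \max_{uv \in E}(d_u + d_v)$.

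The key observation concerns this extremal edge. For every neighbor $k$ of $i$, the pair $ik$ is an edge, so $d_i + d_k \le d_i + d_j$, that is, $d_k \le d_j$. Averaging over $N(i)$ gives $m_i \le d_j$, and symmetrically $m_j \le d_i$. Hence
\[
m_i + m_j \le d_i + d_j \quad\text{on the extremal edge.}
\]
This single inequality is the only structural input needed. The remaining work is elementary algebra in $a = d_i$ and $b = d_j$, which we carry out bound by bound.

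\emph{Bound 33.} Using $m_i + m_j \le a + b$, we get $2(a+b) - (m_i + m_j) \ge a + b$.

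\emph{Bounds 34 and 37.} The quadratic-mean inequality $2(a^2+b^2) \ge (a+b)^2$ gives both $2(a^2+b^2)/(a+b) \ge a+b$ and $\sqrt{2(a^2+b^2)} \ge a+b$. These two estimates hold on every edge, so they hold in particular on the extremal one.

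\emph{Bound 35.} Since $m_i + m_j \le a+b$, we have $2(a^2+b^2)/(m_i+m_j) \ge 2(a^2+b^2)/(a+b)$. By the Bound 34 estimate, this is at least $a+b$.

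\emph{Bound 38.} Applying the same quadratic-mean inequality to $a-1$ and $b-1$ gives $\sqrt{2(a-1)^2 + 2(b-1)^2} \ge (a-1) + (b-1)$. The left side is nonnegative, so this holds whatever the signs of $a-1$ and $b-1$. Adding $2$ to both sides, the expression is at least $a+b$.

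\emph{Bound 39.} On the extremal edge, $m_i + m_j \le a+b$ gives
\[
2(a^2+b^2) - 4(m_i+m_j) + 4 \ge 2(a^2+b^2) - 4(a+b) + 4 = 2(a-1)^2 + 2(b-1)^2 \ge 0 .
\]
So the square root is well defined and the expression is at least the Bound 38 expression, hence at least $a+b$.

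In each case the maximum over edges of the candidate expression is therefore at least $d_i + d_j$, which is at least $\mu(G)$ by Lemma~\ref{lem:anderson-morley}. The only nontrivial step is the observation $m_i \le d_j$ and $m_j \le d_i$ on the extremal edge. It is needed for Bounds 33, 35, and 39, where the $m$-terms enter with an unfavorable sign and could not be controlled on an arbitrary edge.
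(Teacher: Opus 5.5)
Your proposal is correct and matches the paper's proof: both reduce every bound to the Anderson--Morley inequality, handle Bounds 34, 37, and 38 via the quadratic-mean inequality, and handle Bounds 33, 35, and 39 on an edge $ij$ maximizing $d_i+d_j$, where $m_i \le d_j$ and $m_j \le d_i$. Your explicit check that the radicand in Bound 39 is nonnegative on that edge is a small point the paper leaves implicit in its inequality chain.
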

\begin{proof}
    First, let $ij$ be any edge of $G$. Then, by the inequality of the arithmetic and geometric means, we have $\sqrt{2(d_i^2 + d_j^2)} \ge d_i + d_j$, as well as
    \[
        \frac{2(d_i^2 + d_j^2)}{d_i + d_j} \ge \frac{(d_i + d_j)^2}{d_i + d_j} = d_i + d_j ,
    \]
    and
    \begin{align*}
        2 + \sqrt{2(d_i - 1)^2 + 2(d_j - 1)^2} &= 2 + \sqrt{2d_i^2 + 2d_j^2 - 4d_i - 4d_j + 4}\\
        &\ge 2 + \sqrt{d_i^2 + 2 d_i d_j + d_j^2 - 4d_i - 4d_j + 4}\\
        &= 2 + \sqrt{(d_i + d_j - 2)^2}\\
        &= d_i + d_j .
    \end{align*}
    By Lemma \ref{lem:anderson-morley}, Bounds 34, 37, and 38 from Table \ref{tab:remaining-edge} are valid.

    Now, let $ij$ be an edge of $G$ maximizing $d_i + d_j$. Then each neighbor of $i$ has degree at most $d_j$, and each neighbor of $j$ has degree at most $d_i$. Hence $m_i \le d_j$ and $m_j \le d_i$. Therefore, $2(d_i + d_j) - (m_i + m_j) \ge d_i + d_j$, as well as $2(d_i^2 + d_j^2) / (m_i + m_j) \ge 2(d_i^2 + d_j^2) / (d_i + d_j)$, and
    \begin{align*}
        2 + \sqrt{2(d_i^2 + d_j^2) - 4(m_i + m_j) + 4} &\ge 2 + \sqrt{2(d_i^2 + d_j^2) - 4(d_i + d_j) + 4}\\
        &= 2 + \sqrt{2(d_i - 1)^2 + 2(d_j - 1)^2} .
    \end{align*}
    The validity of Bounds 33, 35, and 39 follows from Lemma \ref{lem:anderson-morley} and the already established validity of Bounds 34 and 38.
\end{proof}

We now turn to Bound 42 from Table \ref{tab:remaining-edge}, which we confirm using a different idea.

\begin{proposition}\label{valid_prop_5}
    Bound 42 from Table \ref{tab:remaining-edge} is valid.
\end{proposition}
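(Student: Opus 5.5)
The plan is to pass from $L$ to the signless Laplacian and then to the line-graph side, where edge-type quantities arise naturally. By Lemma~\ref{lem:LQ} it suffices to show $\rho(G)^2 \le \max_{ij\in E}(d_i^2+d_j^2+2m_im_j)$. Let $R$ be the vertex--edge incidence matrix of $G$. Then $Q = RR^{\top}$, and $K = R^{\top}R = 2I + A(\mathcal{L}(G))$ has the same nonzero spectrum. Hence $\rho(G)=\rho(K)$ and $\rho(G)^2=\rho(K^2)$, where $K$ and $K^2$ are nonnegative matrices indexed by $E$. The bound will then follow from Lemma~\ref{lem:collatz} applied to $K^2$ with a suitable positive vector $\mathbf{x}\in\mathbb{R}^E$.

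As a first step, I compute with $\mathbf{x}=\mathbf{1}$. We have $(K\mathbf{1})_e=d_i+d_j$ for $e=ij$. Moreover,
\[
(K^2\mathbf{1})_e=\sum_{f\ni i}s_f+\sum_{f\ni j}s_f = d_i^2+d_im_i+d_j^2+d_jm_j,
\]
where $s_f$ is the sum of the end-degrees of $f$. This yields $\rho^2\le\max_{ij}(d_i^2+d_j^2+d_im_i+d_jm_j)$, which is close but has $d_im_i+d_jm_j$ instead of $2m_im_j$. The main step is therefore to choose a weighted vector that converts the cross terms. My first choice is $x_e = m_i+m_j$ or $x_e=\sqrt{d_id_j}$, whichever makes the sums telescope. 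The idea is that $\sum_{f\ni i} x_f$ produces $\sum_{k\in N(i)} m_k$-type sums, and $m_i$ at the far end of $f$ should pair with $m_j$. For each candidate I would compute $(K^2\mathbf{x})_e$ explicitly and compare it with $(d_i^2+d_j^2+2m_im_j)x_e$.

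If no single weighting works edgewise, I will fall back on a max-argument as in the proof of Bounds~33, 35, and~39. I would use the known refinement of Lemma~\ref{lem:collatz}: for $\mathbf{x}=(d_i)_i$ on $Q$, with $r_i=(Q\mathbf{x})_i/x_i=d_i+m_i$, one has $\rho(Q)\le\max_{ij\in E}\sqrt{r_ir_j}$. This can be proved by applying the Collatz--Wielandt comparison to $D^{-1/2}$-type rescalings on $K$. One then checks that
\[
d_i^2+d_j^2+2m_im_j-(d_i+m_i)(d_j+m_j)=(d_i-d_j)^2+(d_i-m_i)(d_j-m_j).
\]
This difference is nonnegative whenever $d_i-m_i$ and $d_j-m_j$ have the same sign. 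The remaining step is to select the edge $ij$ attaining $\max\sqrt{r_ir_j}$, or one attaining $\max(d_i+d_j)$ where $m_i\le d_j$ and $m_j\le d_i$ hold, and to show that the bad-sign case can be handled. For the bad-sign case I would try Lemma~\ref{lem:L2-bound} at the vertex with the larger $d$, or Lemma~\ref{lem:anderson-morley}, since there $(d_i-d_j)^2$ dominates the negative product.

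I expect this last case analysis (the mixed-sign case) to be the main obstacle. If it resists, I would return to optimizing the weight vector in the $K^2$ Collatz--Wielandt computation, possibly with computational help as the paper does for similar assertions.
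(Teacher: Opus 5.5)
\textbf{There is a genuine gap.} Your proposal does not contain a proof, and its fallback relies on a false lemma.

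Your Collatz--Wielandt computation on $K^2$ with $\mathbf{x}=\mathbf{1}$ is correct. However, it only gives $\max_{ij\in E}\sqrt{d_i^2+d_j^2+d_im_i+d_jm_j}$, which does not imply Bound~42, and you never exhibit or check a weight vector to replace $\mathbf 1$.

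The claimed ``refinement'' $\rho(Q)\le\max_{ij\in E}\sqrt{(d_i+m_i)(d_j+m_j)}$ is false. Take the spider with a center $c$ of degree $n\ge 5$, where each neighbor $u$ of $c$ has degree $2$ and carries a pendant leaf $w$. Then $d_c+m_c=n+2$, $d_u+m_u=(n+5)/2$ and $d_w+m_w=3$. So your right-hand side equals $\sqrt{(n+2)(n+5)/2}$. This is less than $n+1$, while $\rho(G)\ge\mu(G)\ge\Delta+1=n+1$; for $n=5$ this reads $\sqrt{35}<6$. The edge-only $\sqrt{r_ir_j}$ bound holds for zero-diagonal matrices such as $A$. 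For $Q$, the positive diagonal forces the pairs $(i,i)$ into the maximum, and the bound then collapses to Merris' bound (Lemma~\ref{lem:merris}).

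Your Anderson--Morley fallback points the wrong way. At an edge maximizing $d_i+d_j$ you have $m_i\le d_j$ and $m_j\le d_i$, hence $(d_i+d_j)^2\ge d_i^2+d_j^2+2m_im_j$. On the same spider with $n=5$, Bound~42 equals $\sqrt{41}$. Lemmas~\ref{lem:two-delta}--\ref{lem:L2-bound} and your $\mathbf 1$-vector bound $\sqrt{45}$ all give strictly more. So no case analysis built from these ingredients can close the mixed-sign case.

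\textbf{Missing idea.} You need the correct two-vertex refinement, which keeps the diagonal separate.
\begin{itemize}
\item Let $B=D^{-1}QD$. Row $i$ of $B$ has diagonal entry $d_i$ and off-diagonal row sum $m_i$.
\item Take a Perron vector $\mathbf v$ of $B$. Choose $i$ maximizing $\mathbf v_i$, and then $j\in N(i)$ maximizing $\mathbf v_j$ over $N(i)$.
\item This gives $(\rho-d_i)\mathbf v_i\le m_i\mathbf v_j$ and $(\rho-d_j)\mathbf v_j\le m_j\mathbf v_i$.
\item Since $\rho\ge d_i,d_j$, multiplying yields $(\rho-d_i)(\rho-d_j)\le m_im_j$, that is, $\rho\le\frac12\bigl(d_i+d_j+\sqrt{(d_i-d_j)^2+4m_im_j}\bigr)$.
\item Squaring twice shows this is at most $\sqrt{d_i^2+d_j^2+2m_im_j}$. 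The last step reduces to $4(d_id_j-m_im_j)^2\ge 0$.
\end{itemize}
This is the paper's proof, and it needs no sign case analysis.
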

\begin{proof}
By Lemma \ref{lem:LQ}, it suffices to bound $\rho(G)$. Suppose that $G$ is connected, and let $P \in \mathbb{R}^{V \times V}$ be the diagonal matrix with $P_{ii} = d_i$ for every $i \in V$. Then $B = P^{-1} Q P$, which satisfies
\[
    B_{ij} = \begin{cases}
        d_i, & \text{if $j = i$},\\
        d_j / d_i, & \text{if $j \neq i$ and $j \in N(i)$,}\\
        0, & \text{otherwise},
    \end{cases}
\]
for any $i, j \in V$, is similar to $Q$.

Let $\mathbf{v}$ be a positive Perron vector of $B$. Choose $i$ with maximal $\mathbf{v}_i$, then choose $j \in N(i)$ with maximal $\mathbf{v}_j$ among the neighbors of $i$. The Perron vector equations give $(\rho(G) - d_i) \mathbf{v}_i \le m_i \, \mathbf{v}_j$ and $(\rho(G) - d_j) \mathbf{v}_j \le m_j \, \mathbf{v}_i$. Since $\rho(G)$ is at least any diagonal entry of $B$, we have $\rho(G) \ge d_i, d_j$, hence $(\rho(G) - d_i)(\rho(G) - d_j) \le m_i m_j$. Thus, solving the quadratic inequality gives
\[
    \rho(G) \le \frac{d_i + d_j + \sqrt{(d_i - d_j)^2 + 4m_i m_j}}{2} .
\]

To finalize the proof, it is enough to show that
\begin{equation}\label{aux_2}
    \frac{d_i + d_j + \sqrt{(d_i - d_j)^2 + 4m_i m_j}}{2} \le \sqrt{d_i^2 + d_j^2 + 2m_i m_j} .
\end{equation}
By squaring both sides and simplifying, \eqref{aux_2} comes down to proving
\begin{equation}\label{aux_5}
    (d_i + d_j) \sqrt{(d_i - d_j)^2 + 4 m_i m_j} \le d_i^2 + d_j^2 + 2m_i m_j .
\end{equation}
By squaring and simplifying again, \eqref{aux_5} reduces to $d_i^2 d_j^2 + m_i^2 m_j^2 \ge 2 d_i d_j m_i m_j$, which trivially holds.
\end{proof}

In the remainder of the section, we confirm the remaining valid upper bounds from Theorem \ref{thm:main-resolution} via the Collatz--Wielandt comparison, making use of the following technical lemma.

\begin{lemma}\label{lem:cw}
    Let $G$ be a graph, $\lambda > \Delta$ be a real number, and $\varphi \colon (0, 1) \to (0, +\infty)$ be a concave function. Suppose further that for every $i \in V$, we have $d_i \, \varphi(\frac{m_i}{\lambda}) \le (\lambda - d_i) \, \varphi(\frac{d_i}{\lambda})$. Then $\mu(G) \le \lambda$.
\end{lemma}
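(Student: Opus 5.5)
Since $\mu(G)\le\rho(G)$ by Lemma~\ref{lem:LQ}, it suffices to show $\rho(G)\le\lambda$. The signless Laplacian $Q=D+A$ is nonnegative, so the plan is to build a strictly positive test vector $\mathbf{v}$ with $Q\mathbf{v}\le\lambda\mathbf{v}$ entrywise and then apply the Collatz--Wielandt comparison, Lemma~\ref{lem:collatz}. The hypothesis suggests the choice
\[
\mathbf{v}_i=\varphi\!\left(\frac{d_i}{\lambda}\right),\qquad i\in V.
\]
Because $\lambda>\Delta$, every ratio $d_i/\lambda$ lies in $(0,1)$. Hence $\mathbf{v}$ is well defined and strictly positive, since $\varphi$ takes values in $(0,+\infty)$.

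Fix a vertex $i$. The $i$-th entry of $Q\mathbf{v}$ is $d_i\mathbf{v}_i+\sum_{j\in N(i)}\mathbf{v}_j$, so the required inequality $(Q\mathbf{v})_i\le\lambda\mathbf{v}_i$ is equivalent to
\[
\sum_{j\in N(i)}\varphi\!\left(\frac{d_j}{\lambda}\right)\le(\lambda-d_i)\,\varphi\!\left(\frac{d_i}{\lambda}\right).
\]
The key step is Jensen's inequality for the concave function $\varphi$. It applies to the $d_i$ points $d_j/\lambda\in(0,1)$, $j\in N(i)$, whose average is $m_i/\lambda$; this average lies in $(0,1)$ because $(0,1)$ is convex. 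Jensen gives
\[
\frac{1}{d_i}\sum_{j\in N(i)}\varphi\!\left(\frac{d_j}{\lambda}\right)\le\varphi\!\left(\frac{m_i}{\lambda}\right),
\]
that is, $\sum_{j\in N(i)}\varphi(d_j/\lambda)\le d_i\,\varphi(m_i/\lambda)$. The assumed inequality $d_i\,\varphi(m_i/\lambda)\le(\lambda-d_i)\,\varphi(d_i/\lambda)$ then closes the chain and yields $(Q\mathbf{v})_i\le\lambda\mathbf{v}_i$.

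This holds for every $i\in V$, so Lemma~\ref{lem:collatz} gives $\rho(G)\le\lambda$, and therefore $\mu(G)\le\lambda$. Nothing here needs connectivity, since Lemma~\ref{lem:collatz} only requires a nonnegative matrix and a positive vector. We also need $d_i\ge1$ so that $m_i$ is defined, which holds because graphs have no isolated vertices. The only point needing real care is choosing $\mathbf{v}$ as $\varphi$ of the \emph{own} degree ratio, so that concavity turns the neighbor sum into an expression in $m_i$. Once that choice is made, the rest is routine.
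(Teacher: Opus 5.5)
Your proposal is correct and matches the paper's proof essentially step for step. The paper uses the same test vector $\mathbf{v}_i=\varphi(d_i/\lambda)$, applies Jensen's inequality to bound the neighbor sum by $d_i\,\varphi(m_i/\lambda)$, invokes the hypothesis to get $(Q\mathbf{v})_i\le\lambda\mathbf{v}_i$, and concludes via Lemma~\ref{lem:collatz} and Lemma~\ref{lem:LQ}.
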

\begin{proof}
    Since $\lambda > \Delta$, note that $\frac{d_i}{\lambda}, \frac{m_i}{\lambda} \in (0, 1)$ for every $i \in V$. Consider the vector $\mathbf{v} \in \mathbb{R}^V$ defined by $\mathbf{v}_i = \varphi(\frac{d_i}{\lambda})$. Then, for every $i \in V$, $(Q \mathbf{v})_i = d_i \, \varphi(\frac{d_i}{\lambda}) + \sum_{j \in N(i)} \varphi(\frac{d_j}{\lambda})$. Since $\varphi$ is concave, by Jensen's inequality, we have $\sum_{j \in N(i)} \varphi(\frac{d_j}{\lambda}) \le d_i \, \varphi(\frac{m_i}{\lambda})$. Therefore,
    \[
        \frac{(Q \mathbf{v})_i}{\mathbf{v}_i} \le d_i + \frac{d_i}{\varphi(\frac{d_i}{\lambda})} \varphi(\tfrac{m_i}{\lambda}) \le d_i + (\lambda - d_i) = \lambda .
    \]
    By Lemma \ref{lem:collatz}, we have $\rho(G) \le \lambda$, whence Lemma \ref{lem:LQ} yields $\mu(G) \le \lambda$.
\end{proof}

We now apply Lemma~\ref{lem:cw} to confirm the remaining 
valid upper bounds, selecting an appropriate $\lambda > \Delta$ 
in each instance.

\begin{proposition}\label{valid_prop_7}
    Bound 8 from Table \ref{tab:remaining-vertex} is valid.
\end{proposition}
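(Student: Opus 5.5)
Let $\lambda = \max_{i \in V} \sqrt{d_i(m_i + 3d_i)}$; the goal is $\mu(G) \le \lambda$. First, dispose of the easy regime. If $\lambda \ge 2\Delta$, the claim follows at once from Lemma~\ref{lem:two-delta}. Otherwise, taking a vertex of degree $\Delta$ shows $\lambda \ge \sqrt{3}\,\Delta > \Delta$, so $\Delta < \lambda < 2\Delta$. We may therefore apply Lemma~\ref{lem:cw} with this $\lambda$, and it remains to choose a concave $\varphi$ that makes the local inequality hold at every vertex.

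Pass to normalized variables $x = d_i/\lambda$ and $y = m_i/\lambda$, both in $(0,1)$. The definition of $\lambda$ gives $\lambda^2 \ge d_i(m_i + 3d_i)$, which is equivalent to $y \le 1/x - 3x$; in addition, $y \le \Delta/\lambda < 1$. The hypothesis of Lemma~\ref{lem:cw} becomes
\[
x\,\varphi(y) \le (1-x)\,\varphi(x) \qquad \text{for all } 0<x<1,\ 0<y<\min\{1,\ 1/x - 3x\}.
\]
I will take $\varphi$ nondecreasing. Then it suffices to check this at $y = \min\{1, 1/x - 3x\}$. This splits into two ranges:
\begin{itemize}
\item[(a)] $x \ge x_0$, where $1/x - 3x \le 1$, with $x_0 = (\sqrt{13}-1)/6 \approx 0.434$ the positive root of $3x^2 + x - 1 = 0$;
\item[(b)] $x < x_0$, where only the constraint $y < 1$ is active.
\end{itemize}

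The obvious choices fail, and they show what $\varphi$ must do. With $\varphi(t) = t$ or $\varphi(t) = \sqrt{t}$, the inequality at $y = 1/x - 3x$ reduces to $x \ge 1/2$, so it fails on $[x_0, 1/2)$. In range (b), the inequality needs $\varphi(x)(1-x)/x \ge \varphi(1^-)$. This forces $\varphi$ to stay large near $0$ relative to $\varphi(1)$, so $\varphi(0^+)$ must be bounded away from $0$. I would therefore first try the affine family $\varphi(t) = a + t$ with $a > 0$. Then (b) becomes $x(a+1) \le (1-x)(a+x)$, i.e.\ $2x^2 + 2ax - a \le 0$, which holds on $(0, x_0)$ as long as $a$ is large enough (about $a \ge 2x_0^2/(1-2x_0)$). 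Condition (a) becomes $x(a + 1/x - 3x) \le (1-x)(a+x)$, i.e.\ $2x^2 + (2a-1)x + (1-a) \le 0$ on $[x_0, 1)$. This is a convex quadratic in $x$, so it suffices to check the two endpoints $x = x_0$ and $x = 1$; at $x = 1$ it equals $a + 2 > 0$.

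That last computation shows a single affine $\varphi$ cannot cover $x$ near $1$, and this is the main obstacle. I would resolve it by using $\lambda < 2\Delta$ once more: vertices with large $x$ satisfy $m_i \le \Delta < \lambda$, and a refined bound on $y$ is available there (for instance via $y \le \Delta/\lambda$ combined with $\lambda^2 \ge 3\Delta^2 + \Delta m$ at a maximum-degree vertex). If that is still insufficient, I would switch to a concave piecewise or quadratic $\varphi$, for example $\varphi(t) = a + 2t - t^2$ or $\varphi(t) = \min\{a + t,\ b\}$. The parameters would be chosen so that (a) holds near $x = 1$ while (b) is preserved, and the resulting one-variable polynomial inequalities would be verified on each subinterval (possibly with the computer check in \cite{GitHub}). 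Once such a $\varphi$ is fixed, Lemma~\ref{lem:cw} yields $\mu(G) \le \lambda$, which is Bound~8.
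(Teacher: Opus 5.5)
Your proposal is not yet a proof. No concrete $\varphi$ is ever fixed, and the last paragraph is a list of fallbacks. You abandon the affine family $\varphi(t)=a+t$ because of an algebra slip:
\begin{itemize}
\item Expanding $x(a+1/x-3x)\le(1-x)(a+x)$ gives $ax+1-3x^2\le a+x-ax-x^2$, i.e.\ $-2x^2+(2a-1)x+(1-a)\le 0$. This quadratic is concave, not convex, and its value at $x=1$ is $a-2$, not $a+2$.
\item Likewise, condition (b) is $x^2+2ax-a\le 0$, not $2x^2+2ax-a\le 0$.
\item In any case, $x$ near $1$ never occurs. Since $\lambda^2\ge d_im_i+3d_i^2>3d_i^2$, every vertex has $x<1/\sqrt3$, so the ``obstacle near $x=1$'' does not exist.
\end{itemize}
Since $\max_x\bigl(-2x^2+(2a-1)x+(1-a)\bigr)=\tfrac12\bigl(a-\tfrac32\bigr)^2$, the choice $a=\tfrac32$ works for all $x$ and all $y\le 1/x-3x$. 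It needs no case split and no use of $y<1$. Concretely, take $\mathbf v_i=3\lambda+2d_i$ and use $d_im_i\le\lambda^2-3d_i^2$:
\[
(Q\mathbf v)_i-\lambda\mathbf v_i=4\lambda d_i+2d_i^2+2d_im_i-3\lambda^2\le-(\lambda-2d_i)^2\le 0 .
\]
Lemmas~\ref{lem:collatz} and~\ref{lem:LQ} then give $\mu(G)\le\lambda$.

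With that correction, your route is a valid alternative to the paper's. The paper takes $\varphi(t)=t^{1/4}$ in Lemma~\ref{lem:cw}. Homogeneity reduces the vertex condition to $1+r^{1/4}\le\sqrt{3+r}$ with $r=m_i/d_i$. This holds because $(3+r)-(1+r^{1/4})^2=(r^{1/4}-1)^2(r^{1/2}+2r^{1/4}+2)$.

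The paper's choice also refutes your heuristic that $\varphi(0^+)$ must be bounded away from $0$. Range (b) only needs $\varphi(x)/x$ to be large for small $x$, which $t^{1/4}$ provides. Among powers $t^p$, tangency of $1+r^p$ and $\sqrt{3+r}$ at $r=1$ (the regular case) forces $p=1/4$. That is why $t$ and $\sqrt t$ failed for you.

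Your corrected affine test vector is slightly more elementary: Jensen's inequality becomes an identity, and the verification is a single perfect square. The paper instead reduces everything to a one-variable inequality in the ratio $m_i/d_i$.
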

\begin{proof}
    Consider the concave function $\varphi(x) = x^{1/4}$ on $(0, 1)$. By invoking Lemma \ref{lem:cw} with $\lambda = \max_{i \in V} \sqrt{d_i(m_i + 3d_i)}$, it suffices to prove $d_i (\frac{m_i}{\lambda})^{1 / 4} \le (\lambda - d_i) (\frac{d_i}{\lambda})^{1 / 4}$ for every $i \in V$, which is equivalent to $1 + (\frac{m_i}{d_i})^{1/4} \le \frac{\lambda}{d_i}$. Let $i$ be any vertex of $G$. Since $\lambda \ge \sqrt{d_i(m_i + 3d_i)}$, it is enough to show that $1 + r^{1 / 4} \le \sqrt{3 + r}$, with $r = \frac{m_i}{d_i}$. The result follows by observing that
    \[
        (3 + r) - (1 + r^{1 / 4})^2 = (r^\frac{1}{4} - 1)^2 (r^{1/2} + 2 r^{1/4} + 2) \ge 0
    \]
    for any $r > 0$.
\end{proof}

\begin{proposition}
    Bound 27 from Table \ref{tab:remaining-vertex} is valid.
\end{proposition}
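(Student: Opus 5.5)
We apply Lemma~\ref{lem:cw} in the same way as in the proof of Proposition~\ref{valid_prop_7}, with
\[
\lambda = \max_{i \in V} \sqrt{(3d_i^2 + 5d_i m_i)/2}
\]
and a power function $\varphi(x) = x^{p}$ for a suitable $p \in (0,1]$, which is concave on $(0,1)$.

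First we check that $\lambda > \Delta$. For a vertex $i$ with $d_i = \Delta$ we have $m_i \ge 1 > 0$. Hence $\lambda^2 \ge \tfrac32\Delta^2 + \tfrac52 \Delta m_i > \Delta^2$.

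With $\varphi(x) = x^p$, the hypothesis of Lemma~\ref{lem:cw} at vertex $i$ reads $d_i (m_i/\lambda)^p \le (\lambda - d_i)(d_i/\lambda)^p$. This is equivalent to $1 + r^{p} \le s$, where $r = m_i/d_i$ and $s = \lambda/d_i$.

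We know two facts about $s$. First, $s \ge \sqrt{(3+5r)/2}$ by the choice of $\lambda$. Second, $s > r$, since $m_i \le \Delta < \lambda$. The exponent $p$ is dictated by the tangency at $r=1$. There both $1+r^p$ and $\sqrt{(3+5r)/2}$ equal $2$. Their derivatives there are $p$ and $5/8$, respectively. So we take $p = 5/8$, and any other choice fails for $r$ near $1$.

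With $p = 5/8$, we split into two cases according to $r$.

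\emph{Case $r \ge 3$.} We use only $s > r$, so it suffices to show $1 + r^{5/8} \le r$ for $r \ge 3$. At $r=3$ this holds, since $3^{5/8} < 2$. Beyond that, the function $r - r^{5/8}$ is increasing for $r \ge 1$, so the inequality persists.

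\emph{Case $0 < r \le 3$.} We use $s \ge \sqrt{(3+5r)/2}$. After the substitution $t = r^{1/8} \in (0, 3^{1/8}]$ and squaring, we must prove
\[
g(t) := 1 + 5t^8 - 4t^5 - 2t^{10} \ge 0 .
\]
Tangency at $r=1$ means $t=1$ is a double root of $g$. So we factor $g(t) = (t-1)^2 h(t)$ and show $h \ge 0$ on $(0, 3^{1/8}]$.

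This last step is the main obstacle. The polynomial $g$ is genuinely negative for large $t$, and its margin near $r=3$ is small: $g(3^{1/8}) = 16 - 4\cdot 3^{5/8} - 2\cdot 3^{5/4} \approx 0.15$. A clean sum-of-squares factorization as in Proposition~\ref{valid_prop_7} is therefore unavailable.

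We would first try to locate the roots of the degree-$8$ polynomial $h$ explicitly, or bound $h$ from below on the interval. Failing that, we would move the case threshold from $3$ down to some $r_0$ at which both $1 + r_0^{5/8} \le r_0$ and $g(r_0^{1/8}) \ge 0$ still hold, which gives slack in both cases. The nonnegativity of $h$ on the remaining compact interval would then be certified by Sturm sequences or a SageMath computation, in the manner of the other computationally verified assertions in this section.

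Combining the two cases, the hypothesis of Lemma~\ref{lem:cw} holds at every vertex, and so $\mu(G) \le \lambda$.
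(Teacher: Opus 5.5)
Your proposal is correct and is essentially the paper's proof. Both use the same $\varphi(x)=x^{5/8}$ and $\lambda$ in Lemma~\ref{lem:cw}, reduce to $1+r^{5/8}\le\lambda/d_i$, and combine $\lambda/d_i\ge\sqrt{(3+5r)/2}$ with a second lower bound linear in $r$. Where the paper uses $\lambda/d_i>r\sqrt{3/2}$ (via a neighbor $j$ with $d_j\ge m_i$), you use the weaker $\lambda/d_i>r$; this still suffices because $1+3^{5/8}<3$, i.e.\ $3^5=243<256=2^8$.

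The paper certifies its final inequality with \texttt{SageMath}, and you planned the same for your last step, but that step can in fact be done by hand. Write $h(t)=-2t^8-4t^7-t^6+2t^5+5t^4+4t^3+3t^2+2t+1$. On $(0,1]$ you have $t^6+4t^7+2t^8\le 5t^4+2t^5$, so $h>0$. On $[1,\infty)$ you have $h'(1)=0$ and $h''(t)\le-180t^3<0$, so $h$ is decreasing there and $h\ge h(3^{1/8})$ on $[1,3^{1/8}]$. Finally $h(3^{1/8})>0$ because $g(3^{1/8})=16-4\cdot 3^{5/8}-2\cdot 3^{5/4}>16-8-8=0$.
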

\begin{proof}
    Consider the concave function $\varphi(x) = x^{5/8}$ on $(0, 1)$. By invoking Lemma \ref{lem:cw} with $\lambda = \max_{i \in V} \sqrt{(3d_i^2 + 5 d_i m_i) / 2}$, it suffices to prove $d_i (\frac{m_i}{\lambda})^{5/8} \le (\lambda - d_i) (\frac{d_i}{\lambda})^{5 / 8}$ for every $i \in V$, which is equivalent to $1 + (\frac{m_i}{d_i})^{5/8} \le \frac{\lambda}{d_i}$.
    
    Let $i$ be any vertex of $G$. Since $i$ has a neighbor $j$ such that $d_j \ge m_i$, and $\lambda \ge \sqrt{(3 d_j^2 + 5 d_j m_j) / 2} > d_j \sqrt{3 / 2}$, it follows that $\lambda > m_i \sqrt{3 / 2}$. Let $r = \frac{m_i}{d_i}$, and observe that $\frac{\lambda}{d_i} > r \sqrt{3 / 2}$. Also, since $\lambda \ge \sqrt{(3d_i^2 + 5 d_i m_i) / 2}$, we have $\frac{\lambda}{d_i} \ge \sqrt{(3 + 5r) / 2}$. Therefore, to complete the proof, it suffices to show that
    \[
        1 + r^{5 / 8} \le \max \left\{ \sqrt{(3 + 5r) / 2}, r \sqrt{3 / 2} \right\}
    \]
    for any $r > 0$. This can be rigorously confirmed by executing the script \texttt{bound\_confirmation.py} from~\cite{GitHub}, which relies on the built-in \texttt{SageMath} real root isolation algorithm.
\end{proof}

\begin{proposition}
    Bounds 10 and 23 from Table \ref{tab:remaining-vertex}, which are identical, are valid.
\end{proposition}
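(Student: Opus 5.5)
The plan is to apply Lemma~\ref{lem:cw} with $\lambda = \max_{i \in V} \sqrt{d_i(d_i + 3m_i)}$. Note that $\lambda > \Delta$: at a vertex of maximum degree the expression is $\sqrt{\Delta^2 + 3\Delta m_i} > \Delta$. The work is in choosing the concave test function $\varphi$.

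A pure power $\varphi(x) = x^{a}$ cannot work here. Writing $r = m_i/d_i$ and $t = \lambda/d_i$, the condition of Lemma~\ref{lem:cw} becomes $1 + r^{a} \le t$. The only local information is $t \ge \sqrt{1+3r} \approx 1 + \tfrac32 r$ for small $r$, while $r^a \gg r$ as $r \to 0$ whenever $a<1$. The case $a = 1$ only reproduces Merris' condition $t \ge 1 + r$, which fails for $r > 1$. I would therefore use the concave saturating family $\varphi(x) = x/(x+c)$ with $c > 0$, which is positive on $(0,1)$.

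With this choice, the condition $d_i\,\varphi(m_i/\lambda) \le (\lambda - d_i)\,\varphi(d_i/\lambda)$ becomes $m_i(d_i + c\lambda) \le (\lambda - d_i)(m_i + c\lambda)$ after clearing denominators. In the variables $r,t$ this reads
\[
  c t^2 - c t + (1-c) r t - 2r \ge 0 .
\]
A first-order expansion at $r \to 0$, where $t \approx 1 + \tfrac32 r$, shows that $c \ge 2$ is needed, so I would take $c = 2$. The target inequality is then
\[
  g(r,t) = 2t^2 - (2+r)t - 2r \ge 0 .
\]
It is tight at $r = 1$, $t = 2$, which is consistent with regular graphs, where the bound is attained.

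The local estimate $t \ge \sqrt{1+3r}$ alone is not enough for large $r$. For example, with $t = \sqrt{3r}$ the quantity $g$ is about $4r - r\sqrt{3r} < 0$. So, as in the proof for Bound~27, I would add a neighbor estimate. Vertex $i$ has a neighbor $j$ with $d_j \ge m_i$, and $\lambda \ge \sqrt{d_j(d_j + 3m_j)} > d_j \ge m_i$, hence $t > r$. It then suffices to prove $g(r,t) \ge 0$ whenever $t \ge \max\{\sqrt{1+3r},\, r\}$ and $r > 0$. Since $g$ is an upward parabola in $t$, it is enough to check that its larger root $t_+(r)$ lies at or below $\max\{\sqrt{1+3r}, r\}$ for every $r > 0$.

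This one-variable inequality between explicit algebraic functions is the main obstacle. Direct checks are encouraging: for $r \ge 4$, taking $t = r$ gives $g = r^2 - 4r \ge 0$. For small $r$, $g(r,\sqrt{1+3r})$ is nonnegative to first order, with equality exactly at $r = 1$. The intermediate range $1 < r < 4$ still has to be confirmed. I would square out the radicals to obtain polynomial inequalities and certify them by real root isolation, as done for Bound~27 with the script in~\cite{GitHub}.

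If $c = 2$ turns out to fail somewhere in that range, my fallbacks are to tune $c$ slightly, or to use $\varphi(x) = \min\{x/(x+2),\, \kappa\sqrt{x}\}$. This is still concave as a minimum of concave functions, and its square-root branch handles large $r$, where $(1+\sqrt r)^2 \le 1 + 3r$ holds for $r \ge 1$.
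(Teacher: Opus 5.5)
Your framework matches the paper's: Lemma~\ref{lem:cw} with $\lambda=\max_i\sqrt{d_i^2+3d_im_i}$, plus the neighbor estimate $m_i<\lambda$ (your $t>r$). The gap is the test function. With $\varphi(x)=x/(x+2)$ the argument fails, and it fails on the whole range $1<r<4$ that you left open. Since $(2+4r)^2-(2+r)^2(1+3r)=3r^2(1-r)$, you get $g(r,\sqrt{1+3r})=2+4r-(2+r)\sqrt{1+3r}<0$ for every $r>1$. You also have $g(r,r)=r(r-4)<0$ for $r<4$. Hence $g(r,\max\{\sqrt{1+3r},r\})<0$ throughout $(1,4)$; for instance $g(2,\sqrt7)=10-4\sqrt7<0$.

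This is not an artifact of your relaxation. Take cell sizes $(2,20,95)$ with quotient rows $(0,10,0)$, $(1,0,19)$, $(0,4,0)$, which is realizable by Lemma~\ref{lem:realizable}. Then $\lambda=10\sqrt7$ is attained on the first cell, whose vertices have $r=2$. There $(Q\mathbf v)_i/\mathbf v_i=10\bigl(1+\frac{1+2\sqrt7}{1+\sqrt7}\bigr)\approx 27.26>\lambda\approx 26.46$, so the Collatz--Wielandt certificate genuinely fails.

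Neither fallback rescues this. Centers of large stars have $r\to 0$, $t=\sqrt{1+3r}$ and $d_i/\lambda\to 1$, and Jensen is exact there. They force $c\ge 2$, as you computed. The configuration $(r,t)=(2,\sqrt7)$ forces $c(3\sqrt7-7)\le 2\sqrt7-4$, i.e.\ $c\lesssim 1.38$. So no value of $c$ works.

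More generally, star centers force $\varphi'(0^+)\le\frac32\varphi(1^-)$. For $\min\{x/(x+2),\kappa\sqrt x\}$ this means $\kappa\ge 1/3$. But for such $\kappa$ you have $\kappa\sqrt x> x/(x+2)$ on all of $(0,1)$, so you are back to $x/(x+2)$. Note also that which branch is active depends on the arguments $d_i/\lambda$ and $m_i/\lambda$, not on $r$. The square-root branch sits near $x=1$, at star-center-like vertices, which is exactly where it hurts.

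The missing idea is a concave $\varphi$ that keeps the extremal ratio $\varphi(1^-)/\varphi'(0^+)=2/3$ but is piecewise linear rather than strictly concave in between. The paper takes $\varphi(x)=\min\{x,(1+x)/3\}$. Write $x=d_i/\lambda$ and $y=m_i/\lambda$.

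\emph{Case $x>1/2$.} Here $\varphi(x)=(1+x)/3$ and $\varphi(y)\le y$, so the condition follows from $3xy\le 1-x^2$. This is exactly $\lambda^2\ge d_i^2+3d_im_i$.

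\emph{Case $x\le 1/2$.} Here $\varphi(x)=x$ and the condition becomes $\varphi(y)\le 1-x$. This is immediate when $y\le 1/2$, and also when $x\le 1/3$ (using $y<1$). Otherwise it follows from $y\le (1-x^2)/(3x)\le 2-3x$, where the last step is $(2x-1)(4x-1)\le 0$.

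At $(r,t)=(2,\sqrt7)$ this $\varphi$ gives the ratio $\varphi(2/\sqrt7)/\varphi(1/\sqrt7)=(\sqrt7+2)/3\approx 1.55<\sqrt7-1\approx 1.65$. Your $x/(x+2)$ gives $\approx 1.73$ at the same point, which is why it fails.
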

\begin{proof}
    Consider the concave function
    \[
        \varphi(x) = \begin{cases}
            x, & \text{if $x \in (0, \frac{1}{2}]$},\\
            \frac{1 + x}{3}, & \text{if $x \in (\frac{1}{2}, 1)$},
        \end{cases}
    \]
    on $(0, 1)$. By invoking Lemma \ref{lem:cw} with $\lambda = \max_{i \in V} \sqrt{d_i^2 + 3 d_i m_i}$, it suffices to prove
    \begin{equation}\label{aux_3}
        d_i \, \varphi(\tfrac{m_i}{\lambda}) \le (\lambda - d_i) \, \varphi(\tfrac{d_i}{\lambda})
    \end{equation}
    for every $i \in V$. Let $i$ be any vertex of $G$. Depending on whether $\frac{d_i}{\lambda} \le \frac{1}{2}$, we split the argument into two cases.

    \bigskip\noindent
    \textbf{Case 1.} $\frac{d_i}{\lambda} \le \frac{1}{2}$.

    In this case, $\varphi(\frac{d_i}{\lambda}) = \frac{d_i}{\lambda}$, so \eqref{aux_3} transforms to $\varphi(\frac{m_i}{\lambda}) \le 1 - \frac{d_i}{\lambda}$. If $\frac{m_i}{\lambda} \le \frac{1}{2}$, then $\varphi(\frac{m_i}{\lambda}) = \frac{m_i}{\lambda} \le \frac{1}{2} \le 1 - \frac{d_i}{\lambda}$, and the result follows. Now, suppose that $\frac{m_i}{\lambda} > \frac{1}{2}$. If $\frac{d_i}{\lambda} \le \frac{1}{3}$, we have $\varphi(\frac{m_i}{\lambda}) = \frac{1 + \frac{m_i}{\lambda}}{3} < \frac{2}{3} \le 1 - \frac{d_i}{\lambda}$, so the result follows once again.

    It remains to settle the subcase where $\frac{m_i}{\lambda} > \frac{1}{2}$ and $\frac{d_i}{\lambda} \in (\frac{1}{3}, \frac{1}{2}]$. In this situation, $(2 d_i - \lambda)(4 d_i - \lambda) \le 0$, which implies $\lambda^2 - d_i^2 \le 3 d_i (2 \lambda - 3 d_i)$, i.e.,
    \begin{equation}\label{aux_4}
        \frac{\lambda^2 - d_i^2}{3 d_i} \le 2 \lambda - 3 d_i .
    \end{equation}
    Since $\lambda \ge \sqrt{d_i^2 + 3 d_i m_i}$, we have $m_i \le \frac{\lambda^2 - d_i^2}{3 d_i}$, which together with \eqref{aux_4} gives $m_i \le 2 \lambda - 3 d_i$. Therefore, $\varphi(\frac{m_i}{\lambda}) = \frac{1 + \frac{m_i}{\lambda}}{3} \le 1 - \frac{d_i}{\lambda}$.

    \bigskip\noindent
    \textbf{Case 2.} $\frac{d_i}{\lambda} > \frac{1}{2}$.

    In this case, $\varphi(\frac{d_i}{\lambda}) = \frac{\lambda + d_i}{3 \lambda}$, so \eqref{aux_3} transforms to $d_i \, \varphi(\frac{m_i}{\lambda}) \le \frac{\lambda^2 - d_i^2}{3 \lambda}$. Regardless of whether $\frac{m_i}{\lambda} \le \frac{1}{2}$, we have $\varphi(\frac{m_i}{\lambda}) \le \frac{m_i}{\lambda}$, so it suffices to prove that $\frac{d_i m_i}{\lambda} \le \frac{\lambda^2 - d_i^2}{3 \lambda}$. The result now follows directly from $\lambda \ge \sqrt{d_i^2 + 3 d_i m_i}$.
\end{proof}

\begin{proposition}
    Bound 25 from Table \ref{tab:remaining-vertex} is valid.
\end{proposition}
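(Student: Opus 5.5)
The plan is to apply Lemma~\ref{lem:cw} with a power function $\varphi(x) = x^{a}$ for a suitable exponent $a \in (0,1]$, exactly as in the proofs for Bounds~8 and~27. We take $\lambda = \max_{i \in V} \sqrt[4]{3d_i^4 + 13 d_i^2 m_i^2}$.

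First we check that $\lambda > \Delta$. For a vertex $i$ with $d_i = \Delta$ we have $m_i \ge 1$, so $\lambda \ge \sqrt[4]{3\Delta^4 + 13\Delta^2} > \Delta$.

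With $\varphi(x) = x^a$, the hypothesis of Lemma~\ref{lem:cw} reads $d_i (\frac{m_i}{\lambda})^a \le (\lambda - d_i)(\frac{d_i}{\lambda})^a$. This is equivalent to
\[
1 + r^a \le \frac{\lambda}{d_i}, \qquad r = \frac{m_i}{d_i}.
\]
We have two lower bounds for $\lambda/d_i$.
\begin{itemize}
\item Directly from the definition of $\lambda$, $\frac{\lambda}{d_i} \ge \sqrt[4]{3 + 13 r^2}$.
\item As in the proof for Bound~27, vertex $i$ has a neighbour $j$ with $d_j \ge m_i$. Then $\lambda \ge \sqrt[4]{3d_j^4 + 13 d_j^2 m_j^2} > 3^{1/4} d_j \ge 3^{1/4} m_i$, so $\frac{\lambda}{d_i} > 3^{1/4} r$.
\end{itemize}
It therefore suffices to prove the one-variable inequality
\[
1 + r^{a} \le \max\left\{ \sqrt[4]{3 + 13 r^2},\; 3^{1/4} r \right\} \qquad \text{for all } r > 0 .
\]

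The exponent is forced by the point $r = 1$, where both sides of $1 + r^a \le \sqrt[4]{3+13r^2}$ equal $2$. Matching derivatives there gives
\[
a = \frac{26}{4 \cdot 16^{3/4}} = \frac{13}{16},
\]
so we choose $\varphi(x) = x^{13/16}$, which is concave on $(0,1)$. The endpoint regimes are then clear.
\begin{itemize}
\item As $r \to 0$, the left-hand side tends to $1$ while the right-hand side is at least $3^{1/4} > 1$.
\item As $r \to \infty$, the term $3^{1/4} r$ dominates $1 + r^{13/16}$.
\end{itemize}

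The main obstacle is the middle range around $r=1$, where the first branch is tangent to $1 + r^{13/16}$. There we need the second-order behaviour to go the right way, i.e., $\sqrt[4]{3+13r^2} - 1 - r^{13/16}$ should have a double root at $r=1$ and otherwise stay nonnegative until the second branch takes over.

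I would settle this rigorously as the paper does for Bound~27. Substitute $r = t^{16}$ to clear the fractional exponents, so the comparison becomes
\[
(1 + t^{13})^4 \le 3 + 13 t^{32}
\]
on the relevant interval, and $1 + t^{13} \le 3^{1/4} t^{16}$ beyond it. Both are polynomial inequalities, and their sign changes can be certified by real root isolation in \texttt{SageMath}.

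If the single power $x^{13/16}$ unexpectedly failed in some window, the fallback is a piecewise concave $\varphi$, as in the proof for Bounds~10 and~23, with the same case analysis on $d_i/\lambda$.
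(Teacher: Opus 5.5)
Your setup is correct: $\lambda > \Delta$, the reduction of the hypothesis of Lemma~\ref{lem:cw} to $1 + r^{a} \le \lambda/d_i$ with $r = m_i/d_i$, and the neighbour bound $\lambda/d_i > 3^{1/4} r$. The gap is that the one-variable inequality you then need is false, exactly at the step you flagged as the main obstacle. With $a = 13/16$ the first-order terms at $r=1$ match, but the second-order terms go the wrong way. For $r = 1+\epsilon$,
\[
\sqrt[4]{3+13r^2} = 2 + \tfrac{13}{16}\epsilon - \tfrac{91}{1024}\epsilon^2 + O(\epsilon^3), \qquad 1 + r^{13/16} = 2 + \tfrac{13}{16}\epsilon - \tfrac{78}{1024}\epsilon^2 + O(\epsilon^3).
\]
So the difference is $-\tfrac{13}{1024}\epsilon^2 + O(\epsilon^3) < 0$ in a punctured neighbourhood of $r=1$, where the other branch $3^{1/4} r \approx 1.3$ is far too small. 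For instance, at $r = 1.1$ one has $1 + 1.1^{13/16} \approx 2.08052 > 2.08034 \approx \sqrt[4]{18.73}$. Equivalently, $(1+t^{13})^4 \le 3+13t^{32}$ fails for every $t \neq 1$ close to $1$, so no root isolation will certify it.

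This is not just slack lost in your reduction. Take the bipartite graph with an equitable partition into cells of sizes $9, 90, 100$ and quotient entries $b_{12}=10$, $b_{21}=1$, $b_{23}=10$, $b_{32}=9$ (all others zero), which is realizable by Lemma~\ref{lem:realizable}. The maximum $\lambda = \sqrt[4]{187300}$ is attained at the vertices with $d_i = 10$, $m_i = 11$, and the hypothesis of Lemma~\ref{lem:cw} with $\varphi(x) = x^{13/16}$ fails there. Changing the exponent does not help either. Both sides equal $2$ at $r = 1$ for every $a$, so any $a \neq 13/16$ already fails to first order on one side of $r=1$. Your piecewise fallback is unspecified, so the proposal does not yet contain a proof.

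The missing idea is a $\varphi$ that is not a pure power, so that the test condition depends on $d_i/\lambda$ and $m_i/\lambda$ separately rather than only on their ratio. The paper takes $\varphi(x) = x(1-x)^{3/16}$. For this choice the hypothesis of Lemma~\ref{lem:cw} becomes $(m_i/\lambda)^{16/19}(1-m_i/\lambda)^{3/19} \le 1 - d_i/\lambda$. This is compared with the constraint $3(d_i/\lambda)^4 + 13(d_i/\lambda)^2(m_i/\lambda)^2 \le 1$, solved for $d_i/\lambda$; equality is reached at $d_i/\lambda = m_i/\lambda = 1/2$. The substitution $m_i/\lambda = 1/(1+\beta^{19})$ then turns everything into a single polynomial inequality in $\beta > 0$, which is certified in \texttt{SageMath}. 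Because the left-hand side vanishes as $m_i/\lambda \to 1$, the regime of large $m_i$ is handled automatically, and no neighbour argument is needed.
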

\begin{proof}
    Consider the concave function $\varphi(x) = x(1 - x)^{3/16}$ on $(0, 1)$. By invoking Lemma \ref{lem:cw} with $\lambda = \max_{i \in V} \sqrt[4]{3d_i^4+13d_i^2m_i^2}$, it suffices to prove $d_i \frac{m_i}{\lambda} (1 - \frac{m_i}{\lambda})^{3/16} \le (\lambda - d_i) \frac{d_i}{\lambda}(1 - \frac{d_i}{\lambda})^{3/16}$ for every $i \in V$, which is equivalent to $(\frac{m_i}{\lambda})^{16/19} (1 - \frac{m_i}{\lambda})^{3/19} \le 1 - \frac{d_i}{\lambda}$.

    Let $i$ be any vertex of $G$. Since $\lambda \ge \sqrt[4]{3d_i^4 + 13d_i^2 m_i^2}$, we have $3(\frac{d_i}{\lambda})^4 + 13 (\frac{d_i}{\lambda})^2 (\frac{m_i}{\lambda})^2 - 1 \le 0$, so solving the quadratic inequality for $(\frac{d_i}{\lambda})^2$ yields
    \[
        \frac{d_i}{\lambda} \le \sqrt{ \frac{-13(\frac{m_i}{\lambda})^2 + \sqrt{169(\frac{m_i}{\lambda})^4 + 12}}{6} } .
    \]
    Therefore, to complete the proof, it is enough to show that
    \begin{equation}\label{aux_6}
        \left(\frac{m_i}{\lambda}\right)^{16/19} \left(1 - \frac{m_i}{\lambda} \right)^{3/19} \le 1 - \sqrt{ \frac{-13(\frac{m_i}{\lambda})^2 + \sqrt{169(\frac{m_i}{\lambda})^4 + 12}}{6} } .
    \end{equation}
    Put $\beta = (\frac{\lambda - m_i}{m_i})^{1/19}$, so that $\beta > 0$ and $\frac{m_i}{\lambda} = \frac{1}{1 + \beta^{19}}$. Then \eqref{aux_6} reduces to
    \begin{equation}\label{aux_7}
        \sqrt{\frac{-\frac{13}{(1 + \beta^{19})^2} + \sqrt{\frac{169}{(1 + \beta^{19})^4} + 12}}{6}} \le 1 - \frac{\beta^3}{1 + \beta^{19}} . 
    \end{equation}
    Observe that the right-hand side of \eqref{aux_7} is positive because $(1 + \beta^{19}) - \beta^3 = \beta^3 (\beta^{16} - 1) + 1 > 0$. By squaring both sides and simplifying, \eqref{aux_7} comes down to proving
    \begin{equation}\label{aux_8}
        \sqrt{\frac{169}{(\beta^{19} + 1)^4} + 12} \le \frac{6(\beta^{19} - \beta^3 + 1)^2 + 13}{(\beta^{19} + 1)^2} .
    \end{equation}
    By squaring and simplifying again, \eqref{aux_8} reduces to
    \[
        \left( 6(\beta^{19} - \beta^3 + 1)^2 + 13 \right)^2 - 169 \ge 12 (\beta^{19} + 1)^4 ,
    \]
    which holds for every $\beta > 0$, as rigorously confirmed by the \texttt{SageMath} script \texttt{bound\_confirmation.py} from~\cite{GitHub}.
\end{proof}

\begin{proposition}\label{valid_prop_11}
    Bound 26 from Table \ref{tab:remaining-vertex} is valid.
\end{proposition}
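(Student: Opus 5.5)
The plan is to apply Lemma~\ref{lem:cw} with
\[
\lambda=\max_{i\in V}\tfrac12\sqrt{5d_i^2+11d_im_i}
\]
and a suitably tuned concave $\varphi$. First note that $\lambda>\Delta$: for a vertex with $d_i=\Delta$ we have $5\Delta^2+11\Delta m_i>4\Delta^2$.

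For every vertex $i$ we then have two pieces of information, obtained exactly as in the proof for Bound~27. With $r=m_i/d_i$, the first is $\lambda/d_i\ge\sqrt{(5+11r)/4}$. For the second, $i$ has a neighbour $j$ with $d_j\ge m_i$. Then $\lambda\ge\frac12\sqrt{5d_j^2+11d_jm_j}>d_j\sqrt5/2$, which gives $\lambda/d_i>r\sqrt5/2$. The task reduces to choosing $\varphi$ so that the condition $d_i\varphi(m_i/\lambda)\le(\lambda-d_i)\varphi(d_i/\lambda)$ follows from these two facts.

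The natural first choice is $\varphi(x)=x^{p}$. With it the condition becomes $1+r^{p}\le\lambda/d_i$. Equality holds at $r=1$ (regular graphs, where the bound equals $2d$), so $p$ must make the curves tangent there. Matching derivatives gives $p=11/16$. A quick numerical check shows this fails, however. At $r=2$ we get $1+2^{11/16}\approx2.611$, while $\max\{\sqrt{27/4},\,2\sqrt5/2\}\approx2.598$. So a pure power function is too crude for this bound.

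I would therefore move to the two-parameter family $\varphi(x)=x^{a}(1-x)^{b}$ used for Bound~25. This family is concave on $(0,1)$ for $0<a\le1$, $b\ge0$ and $a+b\le1$. The condition then depends on both $d_i/\lambda$ and $m_i/\lambda$, not only on $r$. As in Bound~25, I would solve $\lambda^2\ge(5d_i^2+11d_im_i)/4$ for $d_i/\lambda$ as a function of $x=m_i/\lambda$: $t=d_i/\lambda$ satisfies $5t^2+11tx-4\le0$. This leaves a one-variable inequality of the form
\[
x^{a}(1-x)^{b}\,t\le(1-t)\,t^{a}(1-t)^{b}.
\]
Here $t$ is taken as the positive root of $5t^2+11tx-4=0$, after first checking that the right-hand minus left-hand side is monotone in $t$ in the relevant range. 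The requirement $x<2/\sqrt5$ (from $\lambda>m_i\sqrt5/2$) can be used to cut off the large-$r$ regime.

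The parameters are chosen by tangency at the regular point $x=t=1/2$. With $a=1$, tangency fixes $b$ as a specific rational exponent. Because that $b$ is irrational-looking, after substituting $x=1/(1+\beta^{N})$ for a suitable integer $N$ the inequality becomes a polynomial inequality in $\beta>0$, which can be certified by real root isolation as in \texttt{bound\_confirmation.py}. If that fails, a fallback is a piecewise-linear concave $\varphi$ as in the proof for Bounds~10 and~23, with breakpoints tuned to this bound and a case analysis on the position of $d_i/\lambda$.

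The main obstacle is exactly the choice of $\varphi$. The near-miss at $r=2$ shows that the slack is tiny on the interval $r\in(1,3)$. Verifying that the chosen family beats the two constraints there, rather than the routine algebra, is where the proof lives or dies.
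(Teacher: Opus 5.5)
Your plan is essentially the paper's proof. With $a=1$, tangency at $x=t=\tfrac12$ along $5t^2+11tx=4$ forces $\frac{1-b}{1+b}=\frac{11}{21}$, i.e. $b=5/16$, which is exactly the paper's $\varphi(x)=x(1-x)^{5/16}$. Taking $N=21$ then turns the condition $x^{16/21}(1-x)^{5/21}\le 1-t$ into the same polynomial inequality $\bigl(10(\beta^{21}-\beta^5+1)+11\bigr)^2-121\ge 80(\beta^{21}+1)^2$ that the paper certifies for all $\beta>0$, so neither the constraint $x<2/\sqrt5$ nor any fallback is needed. One correction: your stated concavity range $a+b\le 1$ excludes $a=1$, $b=5/16$, so you must check concavity directly, which follows from $\varphi''(x)=b(1-x)^{b-2}\bigl((1+b)x-2\bigr)<0$ on $(0,1)$ for $0<b\le 1$.
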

\begin{proof}
    Consider the concave function $\varphi(x) = x(1 - x)^{5/16}$ on $(0, 1)$. By invoking Lemma \ref{lem:cw} with $\lambda = \max_{i \in V} \frac{1}{2} \sqrt{5d_i^2+11d_i m_i}$, it suffices to prove $d_i \frac{m_i}{\lambda} (1 - \frac{m_i}{\lambda})^{5/16} \le (\lambda - d_i) \frac{d_i}{\lambda}(1 - \frac{d_i}{\lambda})^{5/16}$ for every $i \in V$, which is equivalent to $(\frac{m_i}{\lambda})^{16/21} (1 - \frac{m_i}{\lambda})^{5/21} \le 1 - \frac{d_i}{\lambda}$.

    Let $i$ be any vertex of $G$. Since $\lambda \ge \frac{1}{2} \sqrt{5d_i^2 + 11d_i m_i}$, we have $5(\frac{d_i}{\lambda})^2 + 11 \frac{d_i}{\lambda} \frac{m_i}{\lambda} - 4 \le 0$, so solving the quadratic inequality for $\frac{d_i}{\lambda}$ yields
    \[
        \frac{d_i}{\lambda} \le \frac{-11\frac{m_i}{\lambda} + \sqrt{121(\frac{m_i}{\lambda})^2 + 80}}{10}.
    \]
    Therefore, to complete the proof, it is enough to show that
    \begin{equation}\label{aux_9}
        \left(\frac{m_i}{\lambda}\right)^{16/21} \left(1 - \frac{m_i}{\lambda} \right)^{5/21} \le 1 - \frac{-11\frac{m_i}{\lambda} + \sqrt{121(\frac{m_i}{\lambda})^2 + 80}}{10} .
    \end{equation}
    Put $\beta = (\frac{\lambda - m_i}{m_i})^{1/21}$, so that $\beta > 0$ and $\frac{m_i}{\lambda} = \frac{1}{1 + \beta^{21}}$. Then \eqref{aux_9} reduces to
    \[
        \frac{-\frac{11}{1 + \beta^{21}} + \sqrt{\frac{121}{(1 + \beta^{21})^2} + 80}}{10} \le 1 - \frac{\beta^5}{1 + \beta^{21}} ,
    \]
    i.e.,
    \begin{equation}\label{aux_10}
        \sqrt{\frac{121}{(\beta^{21} + 1)^2} + 80} \le \frac{10(\beta^{21} - \beta^5 + 1) + 11}{\beta^{21} + 1} .
    \end{equation}
    Observe that the right-hand side of \eqref{aux_10} is positive because $\beta^{21} - \beta^5 + 1 = \beta^5 (\beta^{16} - 1) + 1 > 0$. By squaring both sides and simplifying, \eqref{aux_10} comes down to proving
    \[
        \left( 10(\beta^{21} - \beta^5 + 1) + 11 \right)^2 - 121 \ge 80(\beta^{21} + 1)^2 ,
    \]
    which indeed holds for every $\beta > 0$, as confirmed by the \texttt{SageMath} script \texttt{bound\_confirmation.py} from~\cite{GitHub}.
\end{proof}

The affirmative part of Theorem \ref{thm:main-resolution} now follows from Propositions \ref{valid_prop_1}--\ref{valid_prop_5} and \ref{valid_prop_7}--\ref{valid_prop_11}. We mention in passing that, as suggested by some of the proofs, all of these valid upper bounds are also upper bounds for $\rho(G)$.
Moreover, any expression of the form $\max_{i \in V} f(d_i, m_i)$ for some function $f$, or $\max_{ij \in E} f(d_i, m_i, d_j, m_j)$ for some function $f$ symmetric with respect to $i$ and $j$, is an upper bound for $\mu(G)$ for all graphs $G$ if and only if it is an upper bound for $\rho(G)$ for all graphs $G$.

Indeed, if a graph is a counterexample for the Laplacian spectral radius bound, then by Lemma \ref{lem:LQ}, it is also a counterexample for the signless Laplacian spectral radius bound. On the other hand, if a graph is a counterexample for the signless Laplacian spectral radius bound, then it is straightforward to verify that its Kronecker cover~\cite{Pisanski2018} is a counterexample for the Laplacian spectral radius bound.

\section{Refuted upper bounds}
\label{sec:disproved}

In the present section, we provide a constructive and computer-assisted proof of the negative part of Theorem \ref{thm:main-resolution}, which is embodied in the following proposition.

\begin{proposition}\label{refutation_prop}
    Bounds 11, 13, 18--22, 24, and 30 from Table \ref{tab:remaining-vertex}, and Bounds 40, 47, and 56 from Table \ref{tab:remaining-edge} are invalid.
\end{proposition}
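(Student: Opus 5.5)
The plan is to exhibit, for each of the twelve bounds, an explicit graph $G$ given by an equitable partition. For this graph we will show that the spectral radius of a quotient Laplacian $L_B$ strictly exceeds the right-hand side of the bound. By Lemma~\ref{lem:quotient}, $\mu(G)$ is at least the spectral radius of $L_B$, so the bound fails. The advantage of equitable partitions is that every vertex in a cell $C_s$ has the same $d_i=s_s$ and the same $m_i=\frac{1}{s_s}\sum_t b_{st}s_t$. Hence the maximum in \eqref{vertex_type} or \eqref{edge_type} ranges over only $k$ cells, or over the pairs of cells $(s,t)$ with $b_{st}>0$, and can be computed exactly. Existence of each graph will be certified by checking the conditions of Lemma~\ref{lem:realizable} for chosen cell sizes $n_1,\dots,n_k$.

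First I would understand why each bound might fail, in order to guess the shape of a counterexample.

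Bounds 11, 13 and 30 involve $m_i^3/d_i^2$ or $m_i^4/d_i^3$. These are small when $m_i<d_i$ and large when $m_i>d_i$. The dangerous graphs are therefore ones where high-degree vertices have neighbors of smaller average degree, while low-degree vertices do not gain enough from large $m_i$. Natural candidates are a clique or dense regular part joined to many pendant-like vertices of moderate degree, for example a complete split graph or double-star-type structures. In these, the vertex with maximum degree has $m_i$ well below $d_i$, so its term falls below $2\Delta$, while $\mu$ stays near $\Delta+1$ or higher.

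For the symmetric bounds 18--22 and 24, the expression evaluated at $m_i=d_i$ equals $2d_i$, and the bound slightly undercuts $\max(d_i+m_i)$-type quantities in intermediate regimes. Here I would search over two- and three-cell quotients with parameters such as degrees $a,b$ and cross counts. Near-regular graphs will not work, since then the bound is about $2\Delta\ge\mu$. Instead the search should target bipartite-like semiregular structures with unbalanced degrees, where $\mu$ is close to $d_i+d_j$ along edges.

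For the edge bounds 40, 47 and 56, note that 47 subtracts $(m_i-m_j)^2$ and 56 multiplies by $(m_i+m_j)/(2d_id_j)$. For these, the counterexample should make every edge either have unequal $m$-values or have small $m_i+m_j$, while $\mu$ remains large. Again, a few-cell quotient with a search over small integer parameters is the approach I would take.

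Concretely, I would run a computer search over quotient matrices with $k\le 4$ cells and small integer entries. For each candidate, the search compares the largest eigenvalue of $L_B$ against the bound. It then picks the smallest cell sizes satisfying Lemma~\ref{lem:realizable}. For each bound the write-up would present $B$ and $(n_1,\dots,n_k)$ together with the numerical comparison. Because eigenvalues are algebraic, the strict inequality would be certified rigorously. One way is exact arithmetic: for example, show that the characteristic polynomial of $L_B$ changes sign beyond the bound value, after squaring away radicals. Another is interval arithmetic in \texttt{SageMath}. Only finitely many cell terms enter the maximum, so this certification is routine.

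The main obstacle is finding the counterexamples at all, particularly for the bounds that are close to valid, such as 18--22 and 24. These nearly coincide with the confirmed Bounds 25--27, so the violation margin is presumably tiny. Small graphs will likely not work, and the quotient may need fairly large degrees; exhaustive search over small graphs was already insufficient in earlier work. For these I would first try a two-cell family $B=\begin{pmatrix} a & b\\ c & d\end{pmatrix}$. For this family, $\mu(L_B)$ and both cell terms are explicit functions of $(a,b,c,d)$, and I would optimize the ratio $\mu/\text{bound}$ continuously, then round to integers satisfying $n_1b=n_2c$. If that fails, I would pass to three cells, for instance adding a hub cell. This is where essentially all the effort lies; the verification afterwards is mechanical.
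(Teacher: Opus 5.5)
Your certification framework is the paper's own: realize the graph via Lemma~\ref{lem:realizable}, lower-bound $\mu(G)$ by the largest eigenvalue of $L_B$ via Lemma~\ref{lem:quotient}, and compare exactly against the finitely many cell (or cell-pair) terms. The gap is that this is only the frame. Proposition~\ref{refutation_prop} is purely existential, so its whole content is the twelve explicit counterexamples. The paper lists these in Table~\ref{tab:counterexamples} and verifies each one. You exhibit none; you describe a search you would run, and you concede that this is where essentially all the effort lies. A search plan does not show that its target exists; in this same list, Bounds~44 and~46 are still open. So as written, no bound has been refuted. For each bound you need a concrete object like the paper's example for Bound~11. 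Take cell sizes $(1,7,21)$ with quotient rows $(0,7,0)$, $(1,0,3)$, $(0,1,2)$. The three cells then have $(d,m)=(7,4),(4,4),(3,\tfrac{10}{3})$, so the largest value of $2m_i^3/d_i^2$ is $\tfrac{2000}{243}\approx 8.23$. Meanwhile $\det(xI-L_B)=x(x^2-12x+29)$ gives $\mu(G)\ge 6+\sqrt7\approx 8.65$.

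Several of your heuristics would also misdirect the search. For Bounds~11 and~13, complete split graphs and double stars cannot work. Any vertex all of whose neighbors have degree $\Delta$ contributes $2\Delta^3/d_i^2\ge 2\Delta\ge\mu(G)$ to Bound~11, and similarly for Bound~13. The paper's examples instead let degrees decrease gradually away from a hub ($7\to4\to3$ and $9\to5\to4$), which keeps $m_i/d_i$ near $1$ at the low-degree vertices. For Bounds~18--22 and~24, your first-choice two-cell family is unpromising. For quotient rows $(a,b)$, $(c,e)$, the matrix $L_B$ has rows $(b,-b)$, $(-c,c)$ and hence eigenvalues $0$ and $b+c$. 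The certificate is therefore only $\mu\ge b+c$, which is exactly $\mu$ of the semiregular bipartite graph with degrees $b$ and $c$, and each of these bounds holds on that graph. Internal edges raise degrees without raising $b+c$, and the paper's counterexamples for these bounds use three or four cells. Conversely, small graphs are not ruled out. Bounds~19 and~20 are both refuted by the $20$-vertex graph obtained from $K_{5,5}$ by attaching one pendant vertex to every vertex. There $\mu(G)\ge 6+\sqrt{26}\approx 11.099$, against bound values of about $11.088$ and $11.093$.
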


\begin{table}[t]
\centering
\caption{The counterexample graphs that refute each of the bounds from Proposition \ref{refutation_prop}, described via their equitable partitions with specified cell sizes and associated quotient matrices.}
\vspace{4pt}
\label{tab:counterexamples}
\small
\setlength{\tabcolsep}{2pt}
\renewcommand{\arraystretch}{1.02}
\begin{tabular*}{\textwidth}{@{\extracolsep{\fill}}rcl@{\quad}rcl@{}}
\toprule
Bound & Cell sizes & Quotient & Bound & Cell sizes & Quotient \\
\midrule
11 & $(1, 7, 21)$ & $\begin{bmatrix}\begin{smallmatrix}
0 & 7 & 0\\
1 & 0 & 3\\
0 & 1 & 2
\end{smallmatrix}\end{bmatrix}$
& 22 & $(7, 9, 14)$ & $\begin{bmatrix}\begin{smallmatrix}
0 & 9 & 2\\
7 & 0 & 0\\
1 & 0 & 0
\end{smallmatrix}\end{bmatrix}$\\[5pt]
13 & $(1, 9, 36)$ & $\begin{bmatrix}\begin{smallmatrix}
0 & 9 & 0\\
1 & 0 & 4\\
0 & 1 & 3
\end{smallmatrix}\end{bmatrix}$
& 24 & $(69, 19, 76)$ & $\begin{bmatrix}\begin{smallmatrix}
0 & 0 & 76\\
0 & 0 & 8\\
69 & 2 & 0
\end{smallmatrix}\end{bmatrix}$\\[5pt]
18 & $(48, 12, 42, 56)$ & $\begin{bmatrix}\begin{smallmatrix}
0 & 1 & 0 & 42\\
4 & 0 & 14 & 0\\
0 & 4 & 0 & 4\\
36 & 0 & 3 & 0
\end{smallmatrix}\end{bmatrix}$
& 30 & $(83, 38, 44, 88)$ & $\begin{bmatrix}\begin{smallmatrix}
0 & 0 & 0 & 88\\
0 & 0 & 22 & 0\\
0 & 19 & 0 & 6\\
83 & 0 & 3 & 0
\end{smallmatrix}\end{bmatrix}$\\[8pt]
19 & $(5, 5, 5, 5)$ & $\begin{bmatrix}\begin{smallmatrix}
0 & 5 & 1 & 0\\
5 & 0 & 0 & 1\\
1 & 0 & 0 & 0\\
0 & 1 & 0 & 0
\end{smallmatrix}\end{bmatrix}$
& 40 & $(1, 39, 312)$ & $\begin{bmatrix}\begin{smallmatrix}
0 & 39 & 0\\
1 & 0 & 8\\
0 & 1 & 0
\end{smallmatrix}\end{bmatrix}$\\[8pt]
20 & $(5, 5, 5, 5)$ & $\begin{bmatrix}\begin{smallmatrix}
0 & 5 & 1 & 0\\
5 & 0 & 0 & 1\\
1 & 0 & 0 & 0\\
0 & 1 & 0 & 0
\end{smallmatrix}\end{bmatrix}$
& 47 & $(28, 2, 22, 7)$ & $\begin{bmatrix}\begin{smallmatrix}
0 & 2 & 0 & 1\\
28 & 0 & 0 & 0\\
0 & 0 & 0 & 7\\
4 & 0 & 22 & 0
\end{smallmatrix}\end{bmatrix}$\\[8pt]
21 & $(69, 19, 76)$ & $\begin{bmatrix}\begin{smallmatrix}
0 & 0 & 76\\
0 & 0 & 8\\
69 & 2 & 0
\end{smallmatrix}\end{bmatrix}$
& 56 & $(1, 5, 15, 15)$ & $\begin{bmatrix}\begin{smallmatrix}
0 & 5 & 0 & 0\\
1 & 0 & 3 & 0\\
0 & 1 & 0 & 1\\
0 & 0 & 1 & 0
\end{smallmatrix}\end{bmatrix}$\\
\bottomrule
\end{tabular*}
\end{table}

Proposition~\ref{refutation_prop} is proved constructively by 
exhibiting a specific counterexample for each invalid bound. These graphs are described in Table~\ref{tab:counterexamples} via their equitable partitions with specified cell sizes and associated quotient matrices. The refutation follows a uniform three-step procedure for each candidate bound. First, 
Lemma~\ref{lem:realizable} is applied to guarantee the structural realizability of the given cell sizes and quotient matrix, ensuring the existence of such a graph~$G$. Next, the quotient Laplacian matrix is computed, and Lemma~\ref{lem:quotient} is used to determine 
a lower bound on $\mu(G)$. Finally, this value 
is verified to strictly exceed the respective 
right-hand side expressions from~\eqref{vertex_type} 
or~\eqref{edge_type}, thereby confirming that $G$ 
is a valid counterexample.

This refutation process can be carried out in a rigorous, computer-assisted manner using the \texttt{SageMath} script \texttt{bound\_refutation.py} available in~\cite{GitHub}. To ensure absolute mathematical certainty and eliminate any potential issues with floating-point rounding errors, the script 
utilizes symbolic computation and a built-in real root isolation algorithm. This guarantees that the eigenvalue estimations are sufficiently precise to definitively confirm each refutation.

\section*{Declarations}
\noindent\emph{Generative AI and AI-assisted technologies.} The authors have used the AI-assisted systems AlphaEvolve, AxiomMath, Axplore, ChatGPT, and Claude for counterexample search, algebraic exploration, code generation, proof discussion, and drafting. The authors reviewed all resulting material; correctness rests on the provided proofs and exact counterexamples.
\smallskip

\noindent\emph{Funding.} I.~Damnjanović is supported by the Ministry of Science, Technological Development and Innovation of the Republic of Serbia, grant number 451-03-34/2026-03/200102, and the Science Fund of the Republic of Serbia, grant \#6767, Lazy walk counts and spectral radius of threshold graphs --- LZWK.
\smallskip

\noindent\emph{Competing interests.} The authors declare no competing interests.
\smallskip

\noindent\emph{Data and code availability.} No experimental dataset was used. The \texttt{SageMath} scripts used to computationally verify several assertions and all the counterexamples are available in \cite{GitHub}.

\bibliographystyle{amcjoucc}
\bibliography{references}

@article{AndMor1985,
  author  = {Anderson, Jr., W. N. and Morley, T. D.},
  title   = {{Eigenvalues of the Laplacian of a graph}},
  journal = {Linear Multilinear Algebra},
  volume  = {18},
  number  = {2},
  pages   = {141--145},
  year    = {1985},
  url     = {http://www.doi.org/10.1080/03081088508817681}
}

@article{BraHanSte2006,
  author  = {Brankov, Vladimir and Hansen, Pierre and Stevanovi\'c, Dragan},
  title   = {{Automated conjectures on upper bounds for the largest Laplacian eigenvalue of graphs}},
  journal = {Linear Algebra Appl.},
  volume  = {414},
  pages   = {407--424},
  year    = {2006},
  url     = {http://www.doi.org/10.1016/j.laa.2005.10.017}
}

@book{BrouHae2012,
  author    = {Brouwer, A. E. and Haemers, W. H.},
  title     = {Spectra of Graphs},
  series    = {Universitext},
  publisher = {Springer},
  address   = {New York, NY, USA},
  year      = {2012},
  url       = {http://www.doi.org/10.1007/978-1-4614-1939-6}
}

@misc{GitHub,
    author = {Damnjanović, Ivan and Ha, Taewoo and Stevanović, Dragan},
    title = {{Upper bounds for the Laplacian spectral radius: Proofs and counterexamples: Supplementary material (GitHub repository)}},
    url = {https://github.com/Ivan-Damnjanovic/bhs-bounds}
}

@article{Das2003,
  author  = {Das, Kinkar C.},
  title   = {{An improved upper bound for Laplacian graph eigenvalues}},
  journal = {Linear Algebra Appl.},
  volume  = {368},
  pages   = {269--278},
  year    = {2003},
  url     = {http://www.doi.org/10.1016/S0024-3795(02)00687-0}
}

@article{Das2004,
  author  = {Das, Kinkar Ch.},
  title   = {{The Laplacian spectrum of a graph}},
  journal = {Comput. Math. Appl.},
  volume  = {48},
  number  = {5--6},
  pages   = {715--724},
  year    = {2004},
  url     = {http://www.doi.org/10.1016/j.camwa.2004.05.005}
}

@article{GheYaKaSte2025_AMC,
  author  = {Ghebleh, Mohammad and Al-Yakoob, Salem and Kanso, Ali and Stevanovi\'c, Dragan},
  title = {Graphs having two main eigenvalues and arbitrarily many distinct vertex degrees},
  journal = {Appl. Math. Comput.},
  volume = {495},
  year = {2025},
  pages = {129311},
  url = {http://www.doi.org/10.1016/j.amc.2025.129311}
}

@article{GheYaKaSte2025_ADAM,
  author  = {Ghebleh, Mohammad and Al-Yakoob, Salem and Kanso, Ali and Stevanovi\'c, Dragan},
  title = {{Reinforcement learning for graph theory, II. Small Ramsey numbers}},
  journal = {Art Discrete Appl. Math.},
  volume = {8},
  year = {2025},
  pages = {\#P1.07},
  url = {http://www.doi.org/10.26493/2590-9770.1788.8af}
}

@article{GheYaKaSte2026_DAM,
  author  = {Ghebleh, Mohammad and Al-Yakoob, Salem and Kanso, Ali and Stevanovi\'c, Dragan},
  title   = {{Reinforcement learning for graph theory, I: Reimplementation of Wagner's approach}},
  journal = {Discrete Appl. Math.},
  volume  = {380},
  pages   = {468--479},
  year    = {2026},
  url     = {http://www.doi.org/10.1016/j.dam.2025.10.047}
}

@book{GodRoy2001,
  author    = {Godsil, Chris and Royle, Gordon},
  title     = {Algebraic Graph Theory},
  series    = {Graduate Texts in Mathematics},
  volume    = {207},
  publisher = {Springer},
  address   = {New York, NY, USA},
  year      = {2001},
  url       = {http://www.doi.org/10.1007/978-1-4613-0163-9}
}

@article{Guo2005,
  author  = {Guo, Ji-Ming},
  title   = {{A new upper bound for the Laplacian spectral radius of graphs}},
  journal = {Linear Algebra Appl.},
  volume  = {400},
  pages   = {61--66},
  year    = {2005},
  url     = {http://www.doi.org/10.1016/j.laa.2004.10.022}
}

@book{HornJohn2013,
  author    = {Horn, Roger A. and Johnson, Charles R.},
  title     = {Matrix Analysis},
  edition   = {2},
  publisher = {Cambridge University Press},
  address   = {Cambridge, UK},
  year      = {2013},
  url = {https://doi.org/10.1017/CBO9780511810817},
}

@article{LiPan2001,
  author  = {Li, Jiong-Sheng and Pan, Yong-Liang},
  title   = {{De Caen's inequality and bounds on the largest Laplacian eigenvalue of a graph}},
  journal = {Linear Algebra Appl.},
  volume  = {328},
  pages   = {153--160},
  year    = {2001},
  url     = {http://www.doi.org/10.1016/S0024-3795(00)00307-4}
}

@article{LiZhang1998,
  author  = {Li, Jiong-Sheng and Zhang, Xiao-Dong},
  title   = {{On the Laplacian eigenvalues of a graph}},
  journal = {Linear Algebra Appl.},
  volume  = {285},
  pages   = {305--307},
  year    = {1998},
  url     = {http://www.doi.org/10.1016/S0024-3795(98)10149-0}
}

@article{LiuLuTian2004,
  author  = {Liu, Huiqing and Lu, Mei and Tian, Feng},
  title   = {{On the Laplacian spectral radius of a graph}},
  journal = {Linear Algebra Appl.},
  volume   = {376},
  pages    = {135--141},
  year     = {2004},
  url      = {https://doi.org/10.1016/j.laa.2003.06.007}
}

@article{Merris1998,
  author  = {Merris, Russell},
  title   = {{A note on Laplacian graph eigenvalues}},
  journal = {Linear Algebra Appl.},
  volume  = {285},
  number  = {1--3},
  pages   = {33--35},
  year    = {1998},
  url     = {http://www.doi.org/10.1016/S0024-3795(98)10148-9}
}

@article{Nikiforov2007,
  author  = {Nikiforov, Vladimir},
  title   = {{Bounds on graph eigenvalues~II}},
  journal = {Linear Algebra Appl.},
  volume  = {427},
  pages   = {183--189},
  year    = {2007},
  url     = {http://www.doi.org/10.1016/j.laa.2007.07.010}
}

@article{Pan2002,
  author  = {Pan, Yong-Liang},
  title   = {{Sharp upper bounds for the Laplacian graph eigenvalues}},
  journal = {Linear Algebra Appl.},
  volume  = {355},
  number  = {1--3},
  pages   = {287--295},
  year    = {2002},
  url     = {http://www.doi.org/10.1016/S0024-3795(02)00353-1}
}

@article{Pisanski2018,
  author  = {Pisanski, Toma{\v{z}}},
  title   = {Not every bipartite double cover is canonical},
  journal = {Bull. Inst. Comb. Appl.},
  volume  = {82},
  year    = {2018},
  pages   = {51--55},
  url     = {https://bica.the-ica.org/Volumes/82//Reprints/BICA2017-27-Main-Reprint.pdf}
}

@article{Rojo2007,
  author  = {Rojo, Oscar},
  title   = {{A nontrivial upper bound on the largest Laplacian eigenvalue of weighted graphs}},
  journal = {Linear Algebra Appl.},
  volume  = {420},
  pages   = {625--633},
  year    = {2007},
  url     = {http://www.doi.org/10.1016/j.laa.2006.08.022}
}

@article{RoSoRo2000,
    author = {Rojo, Oscar and Soto, Ricardo and Rojo, Héctor},
    title = {{An always nontrivial upper bound for Laplacian graph eigenvalues}},
    journal = {Linear Algebra Appl.},
    volume = {312},
    year = {2000},
    pages = {155--159},
    url = {http://www.doi.org/10.1016/S0024-3795(00)00104-X}
}

@article{ShuHongWenRen2002,
  author  = {Shu, Jin-Long and Hong, Yuan and Wen-Ren, Kai},
  title   = {{A sharp upper bound on the largest eigenvalue of the Laplacian matrix of a graph}},
  journal = {Linear Algebra Appl.},
  volume  = {347},
  pages   = {123--129},
  year    = {2002},
  url     = {http://www.doi.org/10.1016/S0024-3795(01)00548-1}
}

@inproceedings{TaRouCaHa2026,
  author    = {Taieb, Liora and Roucairol, Milo and Cazenave, Tristan and Harutyunyan, Ararat},
  title     = {{Automated refutation with Monte Carlo search of graph theory conjectures on the maximum Laplacian eigenvalue}},
  editor = {Zhang, Yingqian and Hladík, Milan and Moosaei, Hossein},
  booktitle = {Learning and Intelligent Optimization},
  series = {Lecture Notes in Computer Science},
  volume = {15744},
  publisher = {Springer},
  address = {Cham, Switzerland},
  year      = {2026},
  pages     = {52--63},
  url       = {http://www.doi.org/10.1007/978-3-032-09156-7_4}
}

@misc{Wagner2021,
  author        = {Wagner, Adam Zsolt},
  title         = {Constructions in combinatorics via neural networks},
  year          = {2021},
  note          = {\href{https://arxiv.org/abs/2104.14516}
                  {\texttt{arXiv:2104.14516}}},
}

@article{Zhang2004,
  author  = {Zhang, Xiao-Dong},
  title   = {{Two sharp upper bounds for the Laplacian eigenvalues}},
  journal = {Linear Algebra Appl.},
  volume  = {376},
  pages   = {207--213},
  year    = {2004},
  url     = {http://www.doi.org/10.1016/S0024-3795(03)00644-X}
}

@misc{SageMath,
  author = {{The Sage Developers}},
  title = {{SageMath, the Sage Mathematics Software System (Version 10.8)}},
  year = {2025},
  url = {https://www.sagemath.org}
}

\end{document}